\let\cal=\mathcal
\def\R{{\mathbb R}}
\def\H{{\mathbb H}}
\def\P{{\mathbf P}}
\def\L{{\mathcal L}}
\newtheorem{thm}{Theorem}[section]
\newtheorem{lem}[thm]{Lemma}
\newtheorem{prop}[thm]{Proposition}
\theoremstyle{definition}
\newtheorem{de}[thm]{Definition}
\theoremstyle{remark}
\newtheorem{exam}[thm]{Example}
\newtheorem{rem}[thm]{Remark}
\numberwithin{equation}{section}
\newcommand{\rmd}{{\rm d}}
\newcommand{\rme}{{\rm e}}
\newcommand{\ud}{{\rm d}}
\begin{document}

\title[Almost periodic solutions for L\'evy SDE with exponential dichotomy]{Almost periodic solutions for stochastic differential equations with
exponential dichotomy \\
driven by L\'evy noise}

%\author{Zhenxin Liu}
%\address{Z. Liu: School of Mathematics,
%Jilin University, Changchun 130012, P. R. China}
%\email{zxliu@jlu.edu.cn}

\author{Yan Wang}
\address{Y. Wang: JLU-GT Joint Institute for Theoretical Sciences,
Jilin University, Changchun 130012, P. R. China}
\email{wyan10@mails.jlu.edu.cn}

\thanks{This work is part of the author's Ph.D thesis which is completed on April 2013.
This work is partially supported by NSFC Grants 11271151, 11101183. }

\date{}

\subjclass[2000]{60H10, 34C27, 60G51, 34G20}

\keywords{Almost periodicity, stochastic differential equations,
 exponential dichotomy, L\'{e}vy process.  }

\begin{abstract}
In this paper, we study almost periodic solutions for semilinear
stochastic differential equations driven by L\'{e}vy noise with
exponential dichotomy property. Under suitable conditions on the
coefficients, we obtain the existence and uniqueness of bounded
solutions. Furthermore, this unique bounded solution is almost
periodic in distribution under slightly stronger conditions. We also
give two examples to illustrate our results.
\end{abstract}

\maketitle

\section{Introduction}
\setcounter{equation}{0}

Since it was introduced in the 1920s by Bohr and Bochner, almost
periodicity has been extensively studied in differential equations
and dynamical systems. Following the pioneering work of Favard
\cite{Fav}, the separation condition was adopted by Amerio
\cite{Am}, Fink \cite{Fin72}, Seifert \cite{Sei} et al to study the
almost periodic solutions to differential equations. Apart from the
separation method, there are also other methods to investigate
almost periodic solutions, e.g. the stability or Lyapunov function
method in dynamical systems by Miller \cite{M}, Yoshizawa \cite{Y}
et al, the skew-product flow method by Sacker and Sell \cite{SS},
Shen and Yi \cite{SY} et al, the fixed point method by Coppel
\cite{Cop} et al.

From 1980s, the almost periodic solutions to stochastic differential
equations driven by Gaussian noise have been well developed, see
Arnold and Tudor \cite{AT}, Bezandry and Diagana \cite{BD,BD11}, Da
Prato and Tudor \cite{DT} and Halanay \cite{Hal},  Tudor \cite{T},
among others.

L\'evy processes are stochastic processes with independent and
stationary increments, and have many applications ranging from
physics, biology, to finance etc. There are many well known examples
of L\'evy processes, such as Wiener processes, Poisson processes and
stable processes. We refer the reader to Sato \cite{S} for the
theory of L\'evy processes, to Applebaum \cite{A} and Peszat and
Zabczyk \cite{PZ} for finite and infinite dimensional stochastic
differential equations with L\'evy noise perturbations,
respectively. Wang and Liu  \cite{WL} investigated almost periodic
solutions in square-mean sense to stochastic differential equations
perturbed by L\'evy noise. Indeed, as indicated in \cite{KMR,MR}, it
appears that almost periodicity in distribution sense is a more
appropriate concept relatively to solutions of stochastic
differential equations. Recently, Liu and Sun \cite{LS} and Sun
\cite{Sun} studied almost automorphic in distribution solutions of
stochastic differential equations with L\'evy noise.

In this paper, we study the existence and uniqueness of bounded
solution that is almost periodic in distribution to semilinear
stochastic differential equations with exponential dichotomy
property driven by infinite dimensional L\'evy processes which may
admit large jumps.

The rest of the paper is organized as follows. In Section 2, we
recall some basic definitions and  results of L\'evy processes and
almost periodic processes. In Sections 3, we investigate the
existence and uniqueness of bounded solution for semilinear
stochastic differential equations  with exponential dichotomy
property driven by L\'{e}vy noise. Moreover, the unique bounded
solution is almost periodic in distribution under suitable
conditions. In Section 4, we give two examples to illustrate the
theoretical results obtained in this paper.

\section{Preliminaries}
\setcounter{equation}{0}

Throughout the rest of this paper, we assume that $(\mathbb{H}, \|\cdot\|)$ and
$(U,|\cdot|_U)$ are real separable Hilbert spaces. We denote by
$L(U,\mathbb{H})$ the space of all bounded linear operators from $U$
to $\mathbb{H}$. Note that $L(U,\mathbb{H})$ is a Banach space, and
we denote the norm by $\|\cdot\|_{L(U,\mathbb{H})}$. Let
$(\Omega, \mathcal{F}, \mathbf{P} )$ be a probability space, and
$\mathcal{L}^{2}(\mathbf{P}, \mathbb{H})$ stand for the space of
all $\mathbb{H}$-valued random variables $Y$ such that
\begin{displaymath}
\mathbf{E} \|Y\|^{2} = \int_{\Omega}\|Y\|^{2}\ud \mathbf{ P}<\infty.
\end{displaymath}
For $Y\in\mathcal{L}^{2}(\mathbf{P},\mathbb{H})$, let
\[
\|Y\|_{2}{:=}\left( \int_{\Omega}\|Y\|^{2}\ud \mathbf{P} \right)^{
1/2 }.
\]
Then $\mathcal{L}^{2} (\mathbf{P}, \mathbb{H})$ is a Hilbert space
equipped with the norm $\|\cdot\|_2$.
The L\'evy process we consider is $U$-valued.

\subsection{L\'evy process and L\'evy-It\^o decomposition}\label{levy}

\begin{de}\rm
A $U$-valued stochastic process $L=(L(t),t\ge 0)$ is called
\emph{L\'{e}vy process} if:
\begin{enumerate}
  \item $L(0)=0$ almost surely;
  \item $L$ has independent and stationary increments;
  \item $L$ is stochastically continuous,
  i.e. for all $\epsilon>0$ and for all $s>0$
   \[
     \lim_{t\rightarrow s}P(|L(t)-L(s)|_U>\epsilon)=0.
  \]
\end{enumerate}
\end{de}
Let $L$ be an L\'evy process. We recall some definitions and L\'evy-It\^o decomposition theorem; see \cite{A,PZ,S}
for details.
\begin{de}
\begin{enumerate}
  \item[ ]
  \item A Borel set $B$ in $U-\{0\}$ is {\em bounded below}  if $0\notin \overline{B}$,
         the closure of $B$.
  \item $\nu(\cdot)=\mathbf{E}(N(1,\cdot))$ is called the
        \emph{intensity measure} associated with $L$,
        where $\Delta L(t)=L(t)-L(t-)$ for each $t\geq 0$ and
        \[
           N(t,B)(\omega):=\sharp \{0\leq s\leq t: \Delta L(s)(\omega)\in B\}
           = \sum_{0\leq s\leq t} \chi_B (\Delta L(s)(\omega))
        \]
        with $\chi_B$ being the indicator function for any Borel set $B$
        in $U-\{0\}$.
  \item $N(t,B)$ is called {\em Poisson random measure} if $B$ is bounded below, for each $t\geq 0$.
  \item For each $t\geq 0$ and $B$ bounded below, we define
        the \emph{compensated Poisson random measure} by
        \[
        \widetilde{N}(t,B)=N(t,B)-t\nu(B).
        \]
\end{enumerate}
\end{de}

\begin{prop}[L\'evy-It\^o decomposition]\label{lid}
If $L$ is a $U$-valued L\'{e}vy process, then there exist $a\in U$,
a $U$-valued Wiener process $W$ with covariance operator $Q$, so
called $Q$-Wiener process, and an independent Poisson random measure
$N$ on $\R^+ \times (U- \{0\})$ such that, for each $t\ge 0$
\begin{equation}\label{levy-ito}
L(t)=at+W(t)+ \int_{|x|_U <1} x \widetilde N (t,\rmd x)+ \int_{|x|_U
\ge 1} x N (t,\rmd x).
\end{equation}
Here the Poisson random measure $N$ has the intensity measure $\nu$
which satisfies
\begin{equation}\label{nu}
\int_U(|y|_U^2\wedge1)\nu(\rmd y)<\infty
\end{equation}
and $\widetilde N$ is the compensated Poisson random measure of $N$.
\end{prop}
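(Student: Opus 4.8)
The plan is to prove the L\'evy--It\^o decomposition along the classical lines (as in \cite{A,PZ,S}), organized around separating the large jumps, the small jumps, and the continuous part.

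\emph{Step 1 (the jump measure).} Starting from the definition of $N(t,B)$, one checks that for every Borel set $B$ bounded below the counting process $t\mapsto N(t,B)$ has independent and stationary increments and, being $\N$-valued and stochastically continuous, is a Poisson process; in particular $\nu(B)=\mathbf{E}\,N(1,B)<\infty$, so $\nu$ is finite on sets bounded below. For pairwise disjoint $B_1,\dots,B_k$ bounded below the processes $N(\cdot,B_j)$ are mutually independent, so $N$ is a Poisson random measure on $\R^+\times(U-\{0\})$ with intensity $\nu$. It follows that $\int_B x\,\rmd N(t,\rmd x)=\sum_{0\le s\le t}\Delta L(s)\chi_B(\Delta L(s))$ is a compound Poisson process for each $B$ bounded below; in particular the large-jump part $Y(t):=\int_{|x|_U\ge1}x\,N(t,\rmd x)$ is an a.s.\ finite sum and is itself a L\'evy process. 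One then argues, in the standard way, that $\widetilde L:=L-Y$ is again a L\'evy process, now with all jumps of size $<1$, hence with moments of every order.

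\emph{Step 2 (the small jumps and the integrability of $\nu$).} For $0<\var<1$ set $X_\var(t):=\int_{\var\le|x|_U<1}x\,\widetilde N(t,\rmd x)$, a mean-zero c\`adl\`ag martingale with $\mathbf{E}|X_\var(t)|_U^2=t\int_{\var\le|x|_U<1}|x|_U^2\,\nu(\rmd x)$ and with orthogonal increments over disjoint annuli. Since $\widetilde L$ splits, for each $\var$, into the independent pieces $X_\var$ and $\widetilde L-X_\var$, comparing second moments bounds $\int_{\var\le|x|_U<1}|x|_U^2\,\nu(\rmd x)$ uniformly in $\var$; letting $\var\downarrow0$, together with $\nu(\{|x|_U\ge1\})<\infty$ from Step 1, yields
\[
\int_U(|y|_U^2\wedge1)\,\nu(\rmd y)<\infty .
\]
This uniform $L^2$-bound makes $\{X_\var(t)\}_\var$ Cauchy in mean square as $\var\downarrow0$; Doob's inequality upgrades this to uniform convergence on compact time intervals, and the limit, denoted $X(t)=\int_{|x|_U<1}x\,\widetilde N(t,\rmd x)$, is a square-integrable c\`adl\`ag martingale and a L\'evy process whose jumps are precisely $\Delta L(s)\chi_{\{|x|_U<1\}}(\Delta L(s))$.

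\emph{Step 3 (Gaussian part, and the main obstacle).} Put $M(t):=L(t)-Y(t)-X(t)$. Then $M$ inherits independent stationary increments and stochastic continuity, and by construction every jump of $L$ has been subtracted, so $M$ has a.s.\ continuous paths; a stochastically continuous L\'evy process with continuous paths is Gaussian, so computing its characteristic functional gives $M(t)=at+W(t)$ for some $a\in U$ and a $Q$-Wiener process $W$ with trace-class $Q$, which is exactly \eqref{levy-ito}. It remains to establish the independence of $W$ and $N$, which I expect to be the delicate point: one shows that the joint characteristic functional of $\bigl(M(t_1),\dots,M(t_m)\bigr)$ and of $\bigl(N(t,B_j)\bigr)_j$ factorizes, using that $M$ is a mean-square limit of increments of $L$ supported on a vanishing jump region, which are independent of the jumps recorded by $N$. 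The two steps that need genuine care in the present infinite-dimensional setting are this independence argument and the uniform second-moment estimate of Step 2 that forces \eqref{nu} and legitimizes the compensated small-jump integral; both replace the one-dimensional Fourier computations by characteristic-functional and Hilbert-space martingale arguments.
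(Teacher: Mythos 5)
The paper does not prove Proposition \ref{lid} at all: it is quoted as a known result with a pointer to the references \cite{A}, \cite{PZ}, \cite{S}, so there is no in-paper argument to compare against. Your outline is essentially the classical proof given in those sources (large jumps as a compound Poisson part, uniform second-moment bounds forcing \eqref{nu} and an $L^2$-martingale limit for the compensated small-jump integral, and a continuous remainder identified as $at+W(t)$), and you correctly single out the two genuinely delicate points, the independence of $W$ and $N$ and the uniform estimate behind \eqref{nu}; the only additional step you take for granted that also requires real work is the assertion that a path-continuous L\'evy process in a Hilbert space is Gaussian with trace-class covariance, which in the cited treatments is itself a nontrivial theorem rather than a remark.
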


In this paper, Wiener processes we consider are $Q$-Wiener processes;
see \cite{DZ} for details. For simplicity, we
assume that the covariance operator $Q$ of $W$
is of trace class, i.e. ${\rm Tr}Q < \infty$.
Assume that $L_1$ and $L_2$ are two
independent, identically distributed L\'evy processes with
decompositions as in Proposition \ref{lid} with $a, Q, W, N$. Let
\[
L (t) =\left\{ \begin{array}{ll}
                 L_1(t), &  \hbox{ for } t\ge 0, \\
                 -L_2(-t), &  \hbox{ for } t\le 0.
               \end{array}
 \right.
\]
Then $L$ is a two-sided L\'evy process. We assume that the two sided
L\'evy process $L$ is defined on the filtered probability space
$(\Omega,\mathcal F,\mathbf P,(\mathcal F_t)_{t\in\mathbb R})$. By
the L\'evy-It\^o decomposition, it follows that $\int_{|x|_U\ge 1}
\nu(\rmd x)< \infty$. Then we denote $b:=\int_{|x|_U\ge 1} \nu(\rmd
x)$ throughout this paper. We note that the process
$\widetilde{L}:=(\widetilde{L}(t)=L(t+s)-L(s))$ for some $s\in\R$ is
also a two-sided L\'evy process with the same law as $L$.

%We consider the  stochastic differential equation for
%$\mathbb H$-valued stochastic process $X$ with L\'evy noise
%\[
%\rmd X(t) = f(t,X(t))\rmd t + g(t,X(t))\rmd L(t),
%\]
%where $f$ is $\mathbb H$-valued, $g$ is $L(U,\mathbb H)$-valued
%and $L$ is a $U$-valued L\'evy process. By L\'evy-It\^o
%decomposition,
%\begin{align*}
%\rmd X(t) & = (f(t,X(t))+g(t,X(t))a ) \rmd t + g(t,X(t)) \rmd W(t)\\
%& \qquad +\int_{|x|_U < 1} g(t,X(t-)) x~ \widetilde  N(\rmd t, \rmd
%x)+\int_{|x|_U \ge 1} g(t,X(t-)) x~   N(\rmd t, \rmd x).
%\end{align*}
%So we usually consider the stochastic differential equation with L\'evy
%noise of more general form
%\begin{align}\label{sde}
%\rmd X(t) & =  f(t,X(t))\rmd t + g(t,X(t)) \rmd W(t)\nonumber\\
%& \qquad +\int_{|x|_U < 1} F (t,X(t-),x)~ \widetilde  N(\rmd t, \rmd
%x)+\int_{|x|_U \ge 1} G(t,X(t-), x)~   N(\rmd t, \rmd x),
%\end{align}
%where $F$ and $G$ are $\mathbb H$-valued.

\subsection{Square-mean almost periodic processes}

\begin{de}\cite{LS}
A stochastic process $Y: \mathbb{R} \rightarrow
\mathcal{L}^{2}(\mathbf{P}, \mathbb{H})$ is said to be {\em
$\mathcal L^2$-continuous} if for any $s\in\R$,
\begin{eqnarray*}
\lim_{t \rightarrow s}\mathbf{E} \|Y(t)-Y(s)\|^{2}
 = 0.
\end{eqnarray*}
It is {\em $\cal L^2$-bounded} if $\sup_{t\in\R} \|Y(t)\|_2 <\infty$.
\end{de}

Note that if an $\mathbb H$-valued process is $\mathcal
L^2$-continuous, then it is necessarily stochastically continuous.

\begin{de}\cite{WL}\label{defap}
(1) An $\mathcal L^2$-continuous stochastic process $x: \mathbb{R}
\rightarrow \mathcal{L}^{2}(\mathbf{P}, \mathbb{H})$ is said to be
{\em square-mean almost periodic} if for every sequence of real
numbers $\{s'_n\}$, there exist a subsequence $\{s_n\}$ and an
$\mathcal L^2$-continuous stochastic process $y: \mathbb{R}
\rightarrow \mathcal{L}^{2}(\mathbf{P}, \mathbb{H})$ such that
\begin{equation*}
\lim_{n \rightarrow \infty}\sup_{t\in\mathbb R} \mathbf{E}
\|x(t+s_{n})-y(t)\|^{2}= 0.
\end{equation*}
The collection of all square-mean almost periodic stochastic
processes $x: \mathbb{R} \rightarrow
\mathcal{L}^{2}(\mathbf{P},\mathbb{H})$ is denoted by
$AP(\mathbb{R}; \mathcal{L}^{2}(\mathbf{P}, \mathbb{H}))$.\\
(2) A continuous function $g: \mathbb{R} \times
\mathcal{L}^{2}(\mathbf{P}, \mathbb{H})\to L(U,\mathcal L^2(\mathbf
P,\mathbb H))$, $(t,Y)\mapsto g(t,Y)$ is said to be {\em uniformly
square-mean almost periodic} if for every sequence of real numbers
$\{s'_n\}$, there exist a subsequence $\{s_n\}$ and a continuous
function $\widetilde g: \mathbb{R} \times
\mathcal{L}^{2}(\mathbf{P}, \mathbb{H})\to
L(U,\mathcal{L}^{2}(\mathbf{P}, \mathbb{H}))$ such that
\begin{equation*}
\lim_{n \rightarrow \infty} \sup_{t\in\R,Y\in\cal K}\mathbf{E}
\|g(t+s_{n},Y)-\widetilde g(t,Y)\|_{L(U,\mathcal{L}^{2}(\mathbf{P},
\mathbb{H}))}^{2}= 0
\end{equation*}
for every bounded or compact set $\cal K\subset
\mathcal{L}^{2}(\mathbf{P}, \mathbb{H})$.
%The collection of all square-mean almost automorphic functions $g: \mathbb{R} \times
%\mathcal{L}^{2}(\mathbf{P}, \mathbb{H})\to
%L(U,\mathcal{L}^{2}(\mathbf{P}, \mathbb{H}))$ is denoted by
%$AA(\mathbb{R} \times \mathcal{L}^{2}(\mathbf{P}, \mathbb{H});
%L(U,\mathcal{L}^{2}(\mathbf{P}, \mathbb{H})))$.
\\
(3) A function $F: \mathbb{R} \times \mathcal{L}^{2}(\mathbf{P},
\mathbb{H}) \times U \rightarrow \mathcal{L}^2(\mathbf{P},
\mathbb{H})$, $(t,Y,x)\mapsto F(t,Y,x)$ is said to be {\em uniformly
Poisson square-mean almost periodic} if $F$ is continuous in the
following sense
\[
\int_U \mathbf E \|F(t,Y,x)-F(t',Y',x)\|^2\nu(\rmd x) \to 0 \qquad
\hbox{as } (t',Y')\to (t,Y)
\]
and that for every sequence of real numbers $\{s'_n\}$ and every
bounded or compact set $\cal K\subset \mathcal{L}^{2}(\mathbf{P},
\mathbb{H})$, there exist a subsequence $\{s_n\}$ and a continuous
function $\widetilde{F}: \mathbb{R} \times
\mathcal{L}^{2}(\mathbf{P}, \mathbb{H}) \times U \rightarrow
\mathcal{L}^2(\mathbf{P},\mathbb{H})$ in the above sense, with
$\int_U\mathbf{E}\|\widetilde F(t,Y,x)\|^2\nu(\rmd x)<\infty$, such
that
\begin{equation*}
\lim_{n\rightarrow \infty} \sup_{t\in\R,Y\in\cal K} \int_U\mathbf{E}
\|F(t+s_n,Y,x) -\widetilde{F}(t,Y,x)\|^2\nu(\rmd x) =0
\end{equation*}
for every bounded or compact set $\cal K\subset
\mathcal{L}^{2}(\mathbf{P}, \mathbb{H})$.

%The collection of all Poisson square-mean almost
%automorphic functions $F: \mathbb{R}\times
%\mathcal{L}^{2}(\mathbf{P}, \mathbb{H})  \times U \rightarrow
%\mathcal{L}^{2}(\mathbf{P},\mathbb{H})$ is denoted by
%$PAA(\mathbb{R}\times \mathcal{L}^{2}(\mathbf{P}, \mathbb{H}) \times
%U;\mathcal{L}^{2}(\mathbf{P}, \mathbb{H}))$.
\end{de}

In the sequel, (uniformly Poisson) square-mean almost periodicity is
also called (uniformly Poisson) $\cal L^2$-almost periodicity.

%\begin{rem}\label{bound}
%Any square-mean almost automorphic process is $\cal L^2$-bounded. Actually, by
%\cite{FL}, $AA(\mathbb{R};\mathcal{L}^2(\mathbf{P},\mathbb{H}))$ is a Banach
%space when it is equipped with the norm
%\begin{eqnarray*}
%\|Y\|_{\infty}:=\sup_{t\in\mathbb R}\|Y(t)\|_2=\sup_{t\in
%\mathbb{R}}(\mathbf E\|Y(t)\|^2)^{\frac{1}{2}},
%\end{eqnarray*}
%for $Y\in AA(\mathbb{R};\mathcal{L}^2(\mathbf{P},\mathbb{H}))$. Similarly, any
%square-mean almost automorphic function $g:\R\to L(U,\cal L^2(\mathbf P,\mathbb H))$
%is also bounded, i.e. $\sup_{s\in\R}\|g(s)\|_{L(U,\cal L^2(\mathbf P,\mathbb H))} < \infty$.
%\end{rem}

%\begin{lem}\cite{FL}\label{lem0}
%If $Y$, $Y_{1}$ and $Y_{2}$ are all square-mean almost automorphic
%stochastic processes, then
%\begin{enumerate}
%  \item $Y_{1}+Y_{2}$ is square-mean almost automorphic.
%  \item $\lambda Y$ is square-mean almost automorphic for every scalar
%         $\lambda$.
%  \item There exists a constant $M>0$ such that $\sup_{t\in
%\mathbb{R}}\|Y(t)\|_2 \leq M$. That is, $Y$ is bounded in
%$\mathcal{L}^2(\mathbf{P}, \mathbb{H})$.
%\end{enumerate}
%\end{lem}

\begin{lem}\cite{BD}\label{lem2}
$AP(\mathbb{R};\mathcal{L}^2(\mathbf{P},\mathbb{H}))$ is a Banach
space when it is equipped with the norm
\begin{eqnarray*}
\|Y\|_{\infty}:=\sup_{t\in\mathbb R}\|Y(t)\|_2=\sup_{t\in
\mathbb{R}}(E\|Y(t)\|^2)^{\frac{1}{2}},
\end{eqnarray*}
for $Y\in AP(\mathbb{R};\mathcal{L}^2(\mathbf{P},\mathbb{H}))$.
\end{lem}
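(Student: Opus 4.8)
The plan is to exhibit $AP(\mathbb R;\mathcal L^2(\mathbf P,\mathbb H))$ as a closed linear subspace of the Banach space
\[
C_b(\mathbb R;\mathcal L^2(\mathbf P,\mathbb H)) := \{Y:\mathbb R\to\mathcal L^2(\mathbf P,\mathbb H) : Y \text{ is } \mathcal L^2\text{-continuous and } \mathcal L^2\text{-bounded}\},
\]
equipped with $\|Y\|_\infty=\sup_{t\in\mathbb R}\|Y(t)\|_2$. That $C_b$ is complete follows from completeness of $(\mathcal L^2(\mathbf P,\mathbb H),\|\cdot\|_2)$ together with the fact that a uniform limit of $\mathcal L^2$-continuous functions is again $\mathcal L^2$-continuous: a $\|\cdot\|_\infty$-Cauchy sequence is pointwise Cauchy in $\mathcal L^2$, hence converges pointwise, the convergence is uniform, and so the limit is $\mathcal L^2$-continuous and bounded. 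It then remains to check (i) $AP\subset C_b$, (ii) $AP$ is a linear subspace, and (iii) $AP$ is closed in $C_b$.

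For (i), the only nontrivial point is $\mathcal L^2$-boundedness, since $\mathcal L^2$-continuity is part of Definition \ref{defap}. I argue by contradiction: if $x\in AP$ were unbounded, pick $\{s'_n\}$ with $\|x(s'_n)\|_2\to\infty$; by square-mean almost periodicity there are a subsequence $\{s_n\}$ and an $\mathcal L^2$-continuous $y$ with $\sup_{t\in\mathbb R}\mathbf E\|x(t+s_n)-y(t)\|^2\to0$; evaluating at $t=0$ gives $\|x(s_n)\|_2\to\|y(0)\|_2<\infty$, a contradiction. Hence $\|x\|_\infty<\infty$, and $\|\cdot\|_\infty$ is a genuine norm on $AP$.

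For (ii), closure under scalar multiples is immediate, and closure under sums is the classical Bochner double-sequence device: given $x_1,x_2\in AP$ and $\{s'_n\}$, first extract a subsequence along which the shifts of $x_1$ converge uniformly to some $\mathcal L^2$-continuous $y_1$, then extract a further subsequence along which the shifts of $x_2$ converge uniformly to some $\mathcal L^2$-continuous $y_2$; along this common subsequence the shifts of $x_1+x_2$ converge uniformly to $y_1+y_2$, which is $\mathcal L^2$-continuous. For (iii), suppose $x_k\in AP$ and $\|x_k-x\|_\infty\to0$; I must show $x\in AP$. Fix a sequence $\{s'_n\}$. Applying the almost periodicity of each $x_k$ and then a diagonal argument over $k$, I extract a single subsequence $\{s_n\}$ such that for every $k$ the shifts $x_k(\cdot+s_n)$ converge uniformly to some $\mathcal L^2$-continuous $y_k$. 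Since $\|y_k-y_l\|_\infty\le\|x_k-x_l\|_\infty$, the sequence $\{y_k\}$ is Cauchy in $C_b$, hence converges uniformly to some $\mathcal L^2$-continuous $y$. A three-$\varepsilon$ estimate
\[
\sup_{t\in\mathbb R}\|x(t+s_n)-y(t)\|_2\le\|x-x_k\|_\infty+\sup_{t\in\mathbb R}\|x_k(t+s_n)-y_k(t)\|_2+\|y_k-y\|_\infty
\]
shows, on first choosing $k$ large and then $n$ large, that $x(\cdot+s_n)\to y$ uniformly; thus $x\in AP$ and the proof is complete.

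The routine parts are the completeness of $C_b$ and the stability of $\mathcal L^2$-continuity under uniform limits; the main care is needed in the diagonal extraction in step (iii) — producing one common subsequence that works simultaneously for all $x_k$ — and in arranging the three-$\varepsilon$ estimate so that the order of quantifiers (choose $k$, then $n$) is respected. I do not anticipate a genuine obstacle beyond this bookkeeping, since the argument parallels the deterministic Bohr theory with $\|\cdot\|_2$ replacing the norm on $\mathbb H$.
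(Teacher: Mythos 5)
Your proof is correct: the boundedness-by-contradiction step, the Bochner double-extraction for sums, and the diagonal-plus-three-$\varepsilon$ argument for closedness in $C_b(\mathbb R;\mathcal L^2(\mathbf P,\mathbb H))$ are all sound, with the quantifiers handled in the right order. The paper itself gives no proof but delegates to the cited reference \cite{BD}, where the argument is exactly this standard closed-subspace reasoning (classical Bohr--Bochner theory with $\|\cdot\|_2$ in place of the norm on $\mathbb H$), so your route coincides with the intended one.
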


%Under some conditions, the composition of square-mean almost automorphic
%functions is still square-mean almost automorphic; see the following result.

\begin{prop}\cite{BD}\label{aacom}
Let $f: \mathbb{R}\times \mathcal{L}^2(\mathbf{P}, \mathbb{H})
\rightarrow \mathcal{L}^2(\mathbf{P}, \mathbb{H})$, $(t,Y)\mapsto
f(t,Y)$ be square-mean almost periodic on each compact set of
$\mathcal{L}^2(\mathbf{P}, \mathbb{H})$, and assume that $f$
satisfies Lipschitz condition in the following sense:
\begin{eqnarray*}
\mathbf{E}\|f(t,Y)-f(t,Z)\|^2\leq L\mathbf{E}\|Y-Z\|^2
\end{eqnarray*}
for all $Y,Z \in \mathcal{L}^2(\mathbf{P}, \mathbb{H})$ and $t\in
\mathbb{R}$, where $L>0$ is independent of $t$. Then for any
square-mean almost periodic process $Y:\mathbb{R}\rightarrow
\mathcal{L}^2(\mathbf{P}, \mathbb{H})$,  the stochastic process
$F:\mathbb{R}\rightarrow \mathcal{L}^2(\mathbf{P}, \mathbb{H})$
given by $F(t):=f(t,Y(t))$ is square-mean almost periodic.
\end{prop}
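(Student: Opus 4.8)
The plan is to establish this composition result via a Bochner-type double-approximation argument, combining the square-mean almost periodicity of $Y$ in its argument with the almost periodicity of $f$ (uniform on compact sets) in its first variable. Two preliminary observations come first. (i) The range $\mathcal{R}=\{Y(t):t\in\mathbb{R}\}$ is relatively compact in $\mathcal{L}^2(\mathbf{P},\mathbb{H})$: for any sequence $(Y(t_n))_n$ from $\mathcal{R}$, applying Definition~\ref{defap}(1) to $\{t_n\}$ yields a subsequence $\{t_{n_k}\}$ and an $\mathcal{L}^2$-continuous process $\widetilde Y$ with $\sup_{t}\mathbf{E}\|Y(t+t_{n_k})-\widetilde Y(t)\|^2\to0$; evaluating at $t=0$ shows $Y(t_{n_k})\to\widetilde Y(0)$, so every sequence in $\mathcal{R}$ has a convergent subsequence and, $\mathcal{L}^2(\mathbf{P},\mathbb{H})$ being complete, $\mathcal{K}:=\overline{\mathcal{R}}$ is compact. (ii) $F$ is $\mathcal{L}^2$-continuous: as $t\to s$,
\[
\mathbf{E}\|F(t)-F(s)\|^2\le 2L\,\mathbf{E}\|Y(t)-Y(s)\|^2+2\,\mathbf{E}\|f(t,Y(s))-f(s,Y(s))\|^2\to0,
\]
the first term by $\mathcal{L}^2$-continuity of $Y$, the second by continuity of $f(\cdot,Y(s))$.

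Next, fix a sequence $\{s_n'\}$ and extract subsequences in two stages. By square-mean almost periodicity of $Y$, there is a subsequence (still written $\{s_n\}$) and an $\mathcal{L}^2$-continuous $\widetilde Y$ with $\sup_{t}\mathbf{E}\|Y(t+s_n)-\widetilde Y(t)\|^2\to0$; by (i), $\widetilde Y(t)\in\mathcal{K}$ for every $t$. Applying the almost periodicity of $f$ on the compact set $\mathcal{K}$ to this subsequence, pass to a further subsequence (again $\{s_n\}$) to obtain a continuous $\widetilde f$ with
\[
\sup_{t\in\mathbb{R},\,Z\in\mathcal{K}}\mathbf{E}\|f(t+s_n,Z)-\widetilde f(t,Z)\|^2\to0 .
\]
Passing to the limit preserves the Lipschitz estimate, so $\widetilde f$ is Lipschitz in its second variable with the same constant $L$; set $\widetilde F(t):=\widetilde f(t,\widetilde Y(t))$, which is $\mathcal{L}^2$-continuous by the computation in (ii).

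Finally, estimate, using $(a+b)^2\le 2a^2+2b^2$ and the Lipschitz bound,
\[
\mathbf{E}\|F(t+s_n)-\widetilde F(t)\|^2\le 2L\,\mathbf{E}\|Y(t+s_n)-\widetilde Y(t)\|^2+2\,\mathbf{E}\|f(t+s_n,\widetilde Y(t))-\widetilde f(t,\widetilde Y(t))\|^2 .
\]
The first term tends to $0$ uniformly in $t$ by the first extraction; the second, since $\widetilde Y(t)\in\mathcal{K}$ for all $t$, is bounded by $2\sup_{t,\,Z\in\mathcal{K}}\mathbf{E}\|f(t+s_n,Z)-\widetilde f(t,Z)\|^2\to0$ by the second extraction. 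Hence $\sup_{t}\mathbf{E}\|F(t+s_n)-\widetilde F(t)\|^2\to0$, which is precisely square-mean almost periodicity of $F$.

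The main obstacle is the interplay between the two successive extractions and the compact-range fact: one must arrange both approximations along one common sequence $\{s_n\}$, and, crucially, know that the limiting argument $\widetilde Y(t)$ stays inside the fixed compact set $\mathcal{K}$, so that the ``almost periodic on compacta'' hypothesis on $f$ can be invoked with $Z=\widetilde Y(t)$. Verifying relative compactness of $\mathcal{R}$ is the technical point that underpins everything else; the remaining estimates are routine.
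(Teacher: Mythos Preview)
The paper does not give its own proof of this proposition; it is simply quoted from \cite{BD}. Your argument is correct and is essentially the standard one in that reference: relative compactness of the range of a square-mean almost periodic process, a diagonal extraction combining the Bochner conditions for $Y$ and for $f$ on the resulting compact set $\mathcal{K}=\overline{\{Y(t):t\in\mathbb{R}\}}$, and the Lipschitz splitting $f(t+s_n,Y(t+s_n))-\widetilde f(t,\widetilde Y(t))=[f(t+s_n,Y(t+s_n))-f(t+s_n,\widetilde Y(t))]+[f(t+s_n,\widetilde Y(t))-\widetilde f(t,\widetilde Y(t))]$. Your verification that $\widetilde Y(t)\in\mathcal{K}$ and that $\widetilde f$ inherits the Lipschitz constant (needed for the $\mathcal{L}^2$-continuity of $\widetilde F$) are exactly the small points one has to check, and you handle them properly.
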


%The Poisson square-mean almost automorphic functions have the following basic
%properties.

\begin{prop}\cite{WL}\label{lem}
If $F, F_{1}, F_{2}: \mathbb{R} \times \mathcal{L}^{2}(\mathbf{P},
\mathbb{H}) \times U \rightarrow \mathcal{L}^2(\mathbf{P},
\mathbb{H})$ are uniformly Poisson square-mean almost periodic
functions on each compact set of $\mathcal{L}^{2}(\mathbf{P},
\mathbb{H})$, then
\begin{enumerate}
  \item $F_{1}+F_{2}$ is Poisson square-mean almost periodic.
  \item $\lambda F$ is Poisson square-mean almost periodic for every scalar
         $\lambda$.
  \item For any compact subset $\cal K\subset \mathcal{L}^{2}(\mathbf{P}, \mathbb{H})$, there exists a constant $M=M(\cal K)>0$ such that
\[
\sup_{t\in \mathbb{R},Y\in\cal
K}\int_U\mathbf{E}\|F(t,Y,x)\|^2\nu(\rmd x) \leq M.
\]
\end{enumerate}
\end{prop}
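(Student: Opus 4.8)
The plan is to handle the three assertions in turn, the first two by the standard ``almost periodicity is a vector-space property'' bookkeeping --- which here reduces to the quadratic nature of the square-mean norm plus a two-step subsequence extraction --- and the third, the substantive point, by a compactness/contradiction argument. For (1), fix a sequence $\{s_n'\}$ and a compact set $\mathcal K\subset\mathcal L^2(\mathbf P,\mathbb H)$. I would first apply Definition \ref{defap}(3) to $F_1$ and $\mathcal K$, extracting a subsequence $\{s_n^{(1)}\}\subset\{s_n'\}$ and a limit $\widetilde F_1$, and then apply it to $F_2$ along $\{s_n^{(1)}\}$, extracting a further subsequence $\{s_n\}$ and a limit $\widetilde F_2$ (convergence to $\widetilde F_1$ survives passage to this subsequence). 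Put $\widetilde F:=\widetilde F_1+\widetilde F_2$. Applying $\|a+b\|^2\le 2\|a\|^2+2\|b\|^2$ under $\mathbf E$ and under $\int_U(\cdot)\,\nu(\rmd x)$ then yields simultaneously: that $F_1+F_2$ and $\widetilde F$ are continuous in the Poisson sense of Definition \ref{defap}(3); that $\int_U\mathbf E\|\widetilde F(t,Y,x)\|^2\nu(\rmd x)\le 2\int_U\mathbf E\|\widetilde F_1(t,Y,x)\|^2\nu(\rmd x)+2\int_U\mathbf E\|\widetilde F_2(t,Y,x)\|^2\nu(\rmd x)<\infty$; and that $\sup_{t\in\R,\,Y\in\mathcal K}\int_U\mathbf E\|(F_1+F_2)(t+s_n,Y,x)-\widetilde F(t,Y,x)\|^2\nu(\rmd x)\to 0$ as $n\to\infty$. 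That is exactly Definition \ref{defap}(3) for $F_1+F_2$. Assertion (2) is the same argument verbatim, with the factor $|\lambda|^2$ carried through each estimate and $\widetilde F$ replaced by $\lambda\widetilde F$.

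For (3), suppose the bound fails: there are $t_n\in\R$ and $Y_n\in\mathcal K$ with $\int_U\mathbf E\|F(t_n,Y_n,x)\|^2\nu(\rmd x)\to\infty$. Apply Definition \ref{defap}(3) to $F$, the sequence $\{t_n\}$ and the compact set $\mathcal K$; after relabelling the extracted subsequence we obtain a limit $\widetilde F$, continuous in the Poisson sense and with $\int_U\mathbf E\|\widetilde F(t,Y,x)\|^2\nu(\rmd x)<\infty$, such that, taking $t=0$ in the uniform convergence, $\sup_{Y\in\mathcal K}\int_U\mathbf E\|F(t_n,Y,x)-\widetilde F(0,Y,x)\|^2\nu(\rmd x)\to 0$, and in particular
\[
\int_U\mathbf E\|F(t_n,Y_n,x)-\widetilde F(0,Y_n,x)\|^2\nu(\rmd x)\longrightarrow 0 .
\]
By Minkowski's inequality in $L^2(\nu\otimes\mathbf P)$, the map $Y\mapsto\big(\int_U\mathbf E\|\widetilde F(0,Y,x)\|^2\nu(\rmd x)\big)^{1/2}$ has increments bounded by $\big(\int_U\mathbf E\|\widetilde F(0,Y,x)-\widetilde F(0,Y',x)\|^2\nu(\rmd x)\big)^{1/2}$, which tends to $0$ as $Y'\to Y$ by the Poisson continuity of $\widetilde F$; hence this map is continuous and therefore bounded on the compact set $\mathcal K$, say by $\sqrt{M_0}$. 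Combining with the displayed limit through the triangle inequality in $L^2(\nu\otimes\mathbf P)$ gives $\limsup_n\big(\int_U\mathbf E\|F(t_n,Y_n,x)\|^2\nu(\rmd x)\big)^{1/2}\le\sqrt{M_0}<\infty$, contradicting the divergence. Hence the desired constant $M=M(\mathcal K)$ exists.

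The only genuine obstacle is in (3): one must avoid the circular route of trying to transfer a uniform-in-time bound from the limit $\widetilde F$, and instead exploit that the convergence in Definition \ref{defap}(3) is uniform in $t$, so that one may evaluate at the single time $t=0$ while the troublesome sequence $Y_n$ still lies in the fixed compact set $\mathcal K$ --- where the $L^2(\nu\otimes\mathbf P)$-seminorm of $\widetilde F(0,\cdot,\cdot)$ is automatically bounded by continuity. Parts (1) and (2) require nothing beyond the quadratic estimate and the usual diagonal extraction, and the continuity-in-the-Poisson-sense clauses of Definition \ref{defap}(3) are handled by the identical inequalities.
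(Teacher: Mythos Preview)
The paper does not prove this proposition; it is quoted from \cite{WL} and stated without argument. There is therefore no ``paper's own proof'' to compare against. Your proof is correct: parts (1) and (2) are routine, and your contradiction argument for (3) --- using the sequence $\{t_n\}$ itself as the input to Definition~\ref{defap}(3), evaluating the uniform convergence at $t=0$, and bounding $\widetilde F(0,\cdot,\cdot)$ on $\mathcal K$ via Poisson continuity and compactness --- is a clean and standard way to establish the uniform bound.
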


\begin{prop}\cite{WL}\label{th}
Let $F: \mathbb{R} \times \mathcal{L}^{2}(\mathbf{P}, \mathbb{H})
\times U \rightarrow \mathcal{L}^2(\mathbf{P}, \mathbb{H})$,
$(t,Y,x)\mapsto F(t,Y,x)$ be uniformly Poisson square-mean almost
periodic on any compact subsets of $\mathcal{L}^2(\mathbf{P},
\mathbb{H})$, and assume that $F$ satisfies the Lipschitz condition
in the follow sense:
\begin{eqnarray*}
\int_U\mathbf{E}\|F(t,Y,x)-{F}(t,Z,x)\|^2\nu(\rmd x)\leq L\mathbf{E}\|Y-Z\|^2
\end{eqnarray*}
for all $Y,Z\in \mathcal{L}^2(\mathbf{P}, \mathbb{H})$ and $t\in
\mathbb{R}$, where $L>0$ is independent of $t$. Then for any
square-mean almost periodic process $Y:\mathbb{R}\rightarrow
\mathcal{L}^2(\mathbf{P}, \mathbb{H})$, the function $D:\mathbb{R}
\times U\rightarrow \mathcal{L}^2(\mathbf{P}, \mathbb{H})$ given by
$D(t,x):= F(t,Y(t),x)$ is Poisson square-mean almost periodic.
\end{prop}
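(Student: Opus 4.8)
The plan is to run the classical composition argument for Poisson square-mean almost periodic functions: perform a diagonal extraction of a common sequence of translation numbers for $Y$ and for $F$, and then use the Lipschitz hypothesis to transfer the (Poisson) almost periodicity of $F(\cdot,Z,\cdot)$, with $Z$ frozen in a fixed compact set, to $D(\cdot,\cdot)=F(\cdot,Y(\cdot),\cdot)$.

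First I would record the elementary fact that the range $\{Y(t):t\in\R\}$ is relatively compact in $\mathcal L^2(\mathbf P,\mathbb H)$. Indeed, given any sequence $\{Y(t_n)\}$, apply Definition \ref{defap}(1) with $\{s'_n\}=\{t_n\}$ to get a subsequence $\{s_n\}$ and an $\mathcal L^2$-continuous process $y$ with $\sup_{t\in\R}\mathbf E\|Y(t+s_n)-y(t)\|^2\to0$; specialising to $t=0$ gives $Y(s_n)\to y(0)$ in $\mathcal L^2(\mathbf P,\mathbb H)$, so $\cal K:=\overline{\{Y(t):t\in\R\}}$ is compact. Moreover any limit process obtained from $Y$ by translation takes values in $\cal K$.

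Now fix a sequence $\{s'_n\}\subset\R$. \textbf{Step 1.} By square-mean almost periodicity of $Y$, pass to a subsequence (not relabelled) and obtain an $\mathcal L^2$-continuous $\widetilde Y$ with $\sup_{t\in\R}\mathbf E\|Y(t+s_n)-\widetilde Y(t)\|^2\to0$ and $\widetilde Y(t)\in\cal K$ for all $t$. \textbf{Step 2.} Apply Definition \ref{defap}(3) to this subsequence and to the compact set $\cal K$: pass to a further subsequence (still not relabelled) and obtain $\widetilde F$, continuous in the Poisson sense with $\int_U\mathbf E\|\widetilde F(t,Z,x)\|^2\nu(\rmd x)<\infty$, such that
\begin{equation*}
\sup_{t\in\R,\,Z\in\cal K}\int_U\mathbf E\|F(t+s_n,Z,x)-\widetilde F(t,Z,x)\|^2\nu(\rmd x)\longrightarrow0 .
\end{equation*}
Set $\widetilde D(t,x):=\widetilde F(t,\widetilde Y(t),x)$. \textbf{Step 3.} Insert the intermediate term $F(t+s_n,\widetilde Y(t),x)$ and use $\|u+v\|^2\le2\|u\|^2+2\|v\|^2$:
\begin{align*}
\int_U\mathbf E\|D(t+s_n,x)-\widetilde D(t,x)\|^2\nu(\rmd x)
&\le 2\int_U\mathbf E\|F(t+s_n,Y(t+s_n),x)-F(t+s_n,\widetilde Y(t),x)\|^2\nu(\rmd x)\\
&\quad+2\int_U\mathbf E\|F(t+s_n,\widetilde Y(t),x)-\widetilde F(t,\widetilde Y(t),x)\|^2\nu(\rmd x).
\end{align*}
The first term is $\le 2L\,\mathbf E\|Y(t+s_n)-\widetilde Y(t)\|^2$ by the Lipschitz hypothesis, hence tends to $0$ uniformly in $t$ by Step 1; the second term is $\le 2\sup_{t\in\R,\,Z\in\cal K}\int_U\mathbf E\|F(t+s_n,Z,x)-\widetilde F(t,Z,x)\|^2\nu(\rmd x)$ since $\widetilde Y(t)\in\cal K$, hence tends to $0$ by Step 2. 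Taking $\sup_{t\in\R}$ and then $n\to\infty$ gives the desired convergence of $D(\cdot+s_n,\cdot)$ to $\widetilde D$.

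It remains to check the structural requirements in the definition of Poisson square-mean almost periodicity. Continuity of $D$ in the Poisson sense at any $t$ follows from the Poisson continuity of $F$ together with the $\mathcal L^2$-continuity of $Y$, since $(t',Y(t'))\to(t,Y(t))$ as $t'\to t$; the same argument with $\widetilde Y$ in place of $Y$ shows $\widetilde D$ is Poisson continuous. Finally, $\sup_{t\in\R}\int_U\mathbf E\|D(t,x)\|^2\nu(\rmd x)\le M$, and in particular $\int_U\mathbf E\|\widetilde D(t,x)\|^2\nu(\rmd x)<\infty$, follow from Proposition \ref{lem}(3) applied to $F$ on $\cal K$, using $Y(t),\widetilde Y(t)\in\cal K$. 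I expect the only genuinely delicate point to be the order of the two extractions: one must first fix the limit $\widetilde Y$, then invoke the relative compactness of $\{Y(t):t\in\R\}$ so that a single compact set $\cal K$ simultaneously absorbs all the translates $Y(\cdot+s_n)$ and the limit $\widetilde Y$ — this is precisely what makes the uniform-in-$Z$ convergence of Definition \ref{defap}(3) applicable; the remaining two-term splitting is routine.
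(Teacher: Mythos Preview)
The paper does not supply its own proof of this proposition: it is quoted from \cite{WL} and no argument is reproduced here, so there is nothing in the present paper to compare your proof against line by line.

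That said, your argument is the standard composition proof one would expect in \cite{WL}: establish relative compactness of the range $\cal K=\overline{\{Y(t):t\in\R\}}$, extract a common subsequence first for $Y$ and then for $F$ uniformly over $\cal K$, and split $D(t+s_n,x)-\widetilde D(t,x)$ via the intermediate point $F(t+s_n,\widetilde Y(t),x)$ so that the Lipschitz hypothesis controls one piece and the uniform Poisson almost periodicity of $F$ on $\cal K$ controls the other. The steps are sound and the order of extractions is handled correctly; I see no gap.
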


\subsection{Almost periodicity in distribution}
Let $\cal P(\mathbb H)$ be the space of all Borel probability measures on $\mathbb H$ endowed
with the $\beta$ metric:
\[
\beta (\mu,\nu) :=\sup\left\{ \left| \int f \rmd \mu - \int f\rmd \nu\right|: \|f\|_{BL} \le 1
\right\}, \quad \mu,\nu\in \cal P(\mathbb H),
\]
where $f$ are Lipschitz continuous real-valued functions on $\mathbb H$ with the norms
\[
\|f\|_{BL}= \|f\|_L + \|f\|_\infty, \|f\|_L=\sup_{x\neq y} \frac{|f(x)-f(y)|}{\|x-y\|}, \|f\|_{\infty}=\sup_{x\in\mathbb H}|f(x)|.
\]
See \cite[\S 11.3]{Dud} for $\beta$ metric and related properties.
Recall that a sequence $\{\mu_n\}\subset \cal P(\mathbb H)$ is said
to weakly converge to $\mu$ if $\int f \rmd\mu_n\to \int f\rmd \mu$
for all $f\in C_b(\mathbb H)$, the space of all bounded continuous
real-valued functions on $\mathbb H$. It is well known that the
$\beta$ metric is a complete metric on $\cal P(\mathbb H)$ and that
a sequence $\{\mu_n\}$ weakly converges to $\mu$ if and only if
$\beta(\mu_n,\mu)\to 0$ as $n\to\infty$.
%So it follows that a
%sequence $\{X_n\}$ of random variables converge in distribution to
%the random variable $X$ if $\beta(\mu_n,\mu)\to 0$.

\begin{de}
An $\mathbb H$-valued stochastic process $Y(t)$ is said to be {\em
almost periodic in distribution} if its law $\mu(t)$ is a $\cal
P(\mathbb H)$-valued almost periodic mapping, i.e. for every
sequence of real numbers $\{s'_n\}$, there exist a subsequence
$\{s_n\}$ and a $\cal P(\mathbb H)$-valued continuous mapping
$\tilde \mu(t)$ such that
\begin{equation*}
\lim_{n \rightarrow \infty}\sup_{t\in\mathbb R}
\beta(\mu(t+s_n),\tilde \mu(t))= 0.
\end{equation*}
\end{de}

\begin{rem}\label{aad}
Recall that a sequence of random variables $\{X_n\}$ is said to
converge in distribution to the random variable $X$ if the
corresponding laws $\{\mu_n\}$ weakly converge to the law $\mu$ of
$X$, i.e. $\beta(\mu_n,\mu)\to 0$. Since $\cal L^2$ convergence
implies convergence in distribution for a sequence of random
variables, a square-mean almost periodic stochastic process is
necessarily an almost periodic in distribution one; but the converse
is not true.
\end{rem}

%%%%%%%%%%%%%%%%%%%%%%%%%%%%%%%%%%%%%%%%%%%%%%%%%%%%%%%%%%%%%%%%%%%%%%%%%%%%%%%%%%%%%%%%%%%%%%%%%%%%%%%%%%%%%%%%%%%%%%
\section{Main Results}
\setcounter{equation}{0}

Consider the following semilinear stochastic differential equation
\begin{eqnarray}\label{dxr}
\rmd Y(t)&=&AY(t)\rmd t+f(t,Y(t))\rmd t+g(t,Y(t))\rmd W(t)\\
&&+\int_{|x|_U<1}F(t,,Y(t),x)\widetilde{N}(\ud t,\ud x)\nonumber\\
&&+\int_{|x|_U\geq 1}G(t,Y(t),x){N}(\ud t,\ud x), \qquad t \in
\mathbb{R}, \nonumber
\end{eqnarray}
where $A$ is an infinitesimal generator which generates a
$C^0$-semigroup $\{T(t)\}_{t\ge 0}$ on $\mathbb H$, $f:
\mathbb{R}\times\mathcal{L}^{2}(\mathbf{P},\mathbb{H})
\rightarrow\mathcal{L}^{2}(\mathbf{P},\mathbb{H})$,
$g:\mathbb{R}\times \mathcal{L}^{2}(\mathbf{P},\mathbb{H})
\rightarrow L(U,\mathcal{L}^{2}(\mathbf{P},\mathbb{H}))$,
$F:\mathbb{R}\times\mathcal{L}^{2}(\mathbf{P},\mathbb{H})\times U
\rightarrow\mathcal{L}^{2}(\mathbf{P},\mathbb{H})$,
$G:\mathbb{R}\times\mathcal{L}^{2}(\mathbf{P},\mathbb{H})\times U
\rightarrow\mathcal{L}^{2}(\mathbf{P},\mathbb{H})$ are continuous
functions; $W$ and $N$ are the L\'evy-It\^o decomposition components
of the two-sided L\'evy process.

\begin{de} The semigroup $\{T(t)\}_{t\geq0}$ is said to satisfy the {\em exponential dichotomy property} if
there exist projection $P$ and constants $K,\omega>0$ such that
$T(t)$ commutes with $P$ for each $t\ge 0$, $\ker(P)$ is invariant
with respect to $T(t)$, $T(t):R(J)\rightarrow R(J)$ is invertible
and the following holds
\begin{equation}\label{ed}
\|T(t)Px\|\leq K\exp(-\omega t)\|x\|,\;\;\;\textrm{ $t\ge0$};\;\;\;
\|T(t)Jx\|\leq K\exp(\omega t)\|x\|,\;\;\;\textrm{ $t\leq0$},
\end{equation}
where $J=I-P$ and  $T(t)=(T(-t))^{-1}$ for $t\leq0$.
\end{de}

\begin{de}\label{def}
An $\mathcal F_t$-progressively measurable stochastic process
$\{Y(t)\}_{t\in \mathbb{R}}$ is called a {\em mild solution} of
\eqref{dxr} if it satisfies the corresponding stochastic integral
equation
\begin{align}\label{solutionr}
Y(t)=&T(t-r)Y(r)+\int_r^tT(t-s)f(s,Y(s-))\rmd s
+\int_r^tT(t-s)g(s,Y(s-))\rmd W(s)\nonumber\\
&+\int_{r}^{t}\int_{|x|_U<1}T(t-s)F(s,Y(s-),x)\widetilde{N}(\rmd s,\rmd x)\nonumber\\
&+\int_{r}^{t}\int_{|x|_U\geq 1}T(t-s)G(s,Y(s-),x){N}(\rmd s,\rmd
x),
\end{align}
for all $t\ge r$ and each $r\in\mathbb{R}$.
\end{de}

In this section, we require the following basic assumptions.

\begin{enumerate}
  \item[(H1)] The semigroup $T(t)$
              generated by ${A}$ satisfies the exponential dichotomy property, that is,
              \eqref{ed} holds.

  \item[(H2)] Assume that $f$, $g$ are uniformly square-mean almost periodic and $F$, $G$ are uniformly Poisson square-mean
              almost periodic.

  \item[(H3)] Assume that $f$, $g$, $F$ and
              $G$ satisfy Lipschitz conditions in $Y$ uniformly with respect to $t$, that is,
              for all $Y, Z\in \mathcal{L}^2(\mathbf{P},\mathbb{H})$ and $t\in
              \mathbb{R}$,
              \begin{equation*}
               \mathbf{E}\|f(t,Y)-f(t,Z)\|^2\leq L \mathbf{E}\|Y-Z\|^2,
              \end{equation*}
              \begin{equation*}
               \mathbf{E}\|(g(t,Y)-g(t,Z))Q^{1/2}\|_{L(U,\mathcal{L}^{2}(\mathbf{P},
               \mathbb{H}))}^2\leq L \mathbf{E}\|Y-Z\|^2,
              \end{equation*}
              \begin{equation}\label{F}
                \int_{|x|_U<1}\mathbf{E}\|F(t,Y,x)-F(t,Z,x)\|^2\nu(\ud x)\leq L
                \mathbf{E}\|Y-Z\|^2,
              \end{equation}
              \begin{equation}\label{G}
                 \int_{|x|_U\geq1}\mathbf{E}\|G(t,Y,x)-G(t,Z,x)\|^2\nu(\ud x)\leq L
                 \mathbf{E}\|Y-Z\|^2,
              \end{equation}
              for some constant $L>0$ independent of $t$.

\end{enumerate}

\begin{lem}\label{leml2}
Assume (H1)-(H3). If \eqref{dxr} has an
 $\cal L^2$-bounded mild solution, then this solution is $\cal L^2$-continuous.
\end{lem}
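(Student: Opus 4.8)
The plan is to fix an $\mathcal L^2$-bounded mild solution $Y$ and, for arbitrary $r\in\mathbb R$ and $t>r$, estimate $\mathbf E\|Y(t+h)-Y(t)\|^2$ directly from the mild formulation \eqref{solutionr}, showing it tends to $0$ as $h\to 0$. First I would use the representation of $Y(t)$ with base point $r$, namely $Y(t)=T(t-r)Y(r)+\int_r^t T(t-s)f(s,Y(s-))\rmd s+\cdots$, write the analogous expression for $Y(t+h)$ (with the same base point $r$), and split the difference into five groups of terms corresponding to the semigroup part, the drift, the Gaussian stochastic integral, the small-jump compensated integral, and the large-jump integral. For each group I would further split into (i) the contribution of the integral over $[r,t]$ with integrand $T(t+h-s)-T(t-s)$ applied to the coefficient, and (ii) the contribution of the integral over $[t,t+h]$; for the semigroup term the difference is simply $(T(t+h-r)-T(t-r))Y(r)$.

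The key analytic inputs are: strong continuity of $\{T(t)\}_{t\ge 0}$ (so $\|(T(t+h-r)-T(t-r))Y(r)\|\to 0$ pointwise, dominated by $2K\|Y(r)\|$ via \eqref{ed}, giving $\mathbf E\|\cdot\|^2\to 0$ by dominated convergence), the Itô isometry for the $Q$-Wiener and compensated-Poisson integrals, and the linear-growth consequences of (H3). Specifically, from (H3) together with the $\mathcal L^2$-boundedness of $Y$, the quantities $\mathbf E\|f(s,Y(s-))\|^2$, $\mathbf E\|g(s,Y(s-))Q^{1/2}\|^2_{L(U,\mathcal L^2)}$, $\int_{|x|_U<1}\mathbf E\|F(s,Y(s-),x)\|^2\nu(\rmd x)$ and $\int_{|x|_U\ge 1}\mathbf E\|G(s,Y(s-),x)\|^2\nu(\rmd x)$ are all bounded uniformly in $s$ — here one adds and subtracts the value of each coefficient at a fixed point, uses the Lipschitz bound, and uses continuity of the coefficients to control the fixed-point term on the bounded range of values of $Y$. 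Using $\|T(t-s)\|\le \|T(t-s)P\|+\|T(t-s)J\|\le K(e^{-\omega(t-s)}+e^{\omega(t-s)})$ on the relevant compact $s$-range, the over-$[t,t+h]$ pieces are each $O(h)$ (drift and large jumps via Cauchy–Schwarz in $s$, Gaussian and small-jump pieces via the isometry, since $\int_t^{t+h}\rmd s=h$), hence vanish as $h\to 0$. For the over-$[r,t]$ pieces one factors out $(T(t+h-s)-T(t-s))$, bounds it pointwise in $s$ by strong continuity, dominates uniformly by the growth bounds above plus the exponential estimate \eqref{ed}, and concludes by dominated convergence (in $\omega$ and in $s$, using the isometries to reduce the stochastic terms to ordinary integrals of $\mathbf E\|(T(t+h-s)-T(t-s))(\cdot)\|^2$).

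The main obstacle — and the place requiring the most care — is the treatment of the stochastic integrals when only the coefficients' Lipschitz and growth properties are available: one must be sure the Itô isometry applies (which needs the integrands to lie in the appropriate predictable $\mathcal L^2$ spaces, guaranteed by progressive measurability of $Y$ together with the uniform growth bounds) and one must correctly handle the $\widetilde N$ integral over the small-jump region and the $N$ integral over the large-jump region separately, the latter being a finite-activity integral with intensity $b=\int_{|x|_U\ge 1}\nu(\rmd x)<\infty$, so that $\mathbf E\|\int_t^{t+h}\int_{|x|_U\ge 1}T(t+h-s)G(s,Y(s-),x)N(\rmd s,\rmd x)\|^2$ is controlled by $Ch\,(\,\sup_s\int_{|x|_U\ge 1}\mathbf E\|G\|^2\nu(\rmd x)\,)$ using the isometry-type identity for Poisson integrals together with the independence/stationarity of increments. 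Once these four stochastic estimates are in place, assembling the five groups and letting $h\to 0$ gives $\mathbf E\|Y(t+h)-Y(t)\|^2\to 0$, i.e. $\mathcal L^2$-continuity of $Y$.
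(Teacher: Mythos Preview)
Your approach is correct in spirit but more elaborate than the paper's. The paper simply sets the base point equal to the continuity point: writing $Y(t)$ via \eqref{solutionr} with initial time $r$ and estimating $\mathbf E\|Y(t)-Y(r)\|^2$ directly, the semigroup contribution is $(T(t-r)-\mathrm{Id})Y(r)$ and all four drift/stochastic integrals range over the shrinking interval $[r,t]$, so each vanishes as $t\to r$ once the uniform-in-$s$ bounds on $\mathbf E\|f(s,Y(s))\|^2$, $\mathbf E\|g(s,Y(s))Q^{1/2}\|^2$, $\int_{|x|_U<1}\mathbf E\|F\|^2\nu(\rmd x)$, $\int_{|x|_U\ge1}\mathbf E\|G\|^2\nu(\rmd x)$ are in hand (these come from (H2)--(H3) exactly as you outline). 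You instead fix $r<t$ and compare $Y(t+h)$ with $Y(t)$, which forces each integral to be split into an $[r,t]$-piece with integrand $(T(t+h-s)-T(t-s))(\cdot)$ --- requiring a separate dominated-convergence argument --- and a $[t,t+h]$-piece. This doubles the bookkeeping; the paper's choice of base point makes your ``type (i)'' terms disappear for free.

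One technical slip to fix: your bound $\|T(t-s)\|\le K(e^{-\omega(t-s)}+e^{\omega(t-s)})$ via \eqref{ed} is not valid, because the dichotomy estimate on $T(\cdot)J$ holds only for nonpositive arguments and says nothing about the unstable part forward in time; likewise ``dominated by $2K\|Y(r)\|$ via \eqref{ed}'' is not justified. The paper uses instead the generic $C^0$-semigroup bound $\|T(\tau)\|\le Me^{\delta\tau}$ (from \cite[Chapter 1, Theorem 2.2]{Pazy}) on the relevant compact $\tau$-range, and you should do the same.
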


\begin{proof}
Since $T(t)$ is a $C^0$-semigroup, it follows from \cite[Chapter 1,
Theorem 2.2]{Pazy} that there exist positive constants $M,\delta$
such that $\|T(t)\|\le M \rme^{\delta t}$ for all $t\ge 0$. If
$Y(t)$ is an $\cal L^2$-bounded solution of \eqref{dxr}, i.e.
\eqref{solutionr} holds, then it follows from the \eqref{ed},
It\^o's isometry, the properties of the integral for the Poisson
random measure and Cauchy-Schwarz inequality, we have for $t\ge r$
\begin{align*}
&\mathbf E \|Y(t)-Y(r)\|^2 \\
\le & 5 \mathbf{E}\|T(t-r)Y(r)-Y(r)\|^2\\
&+5\mathbf{E}\left\|\int_r^t T(t-s)f(s,Y(s-))\rmd s\right\|^2 \\
&+5 \mathbf{E}\left\|\int_r^tT(t-s)g(s,Y(s-))\rmd W(s)\right\|^2 \\
&+5\mathbf{E} \left\| \int_{r}^{t}\int_{|x|_U<1}T(t-s)F(s,Y(s-),x)\widetilde{N}(\rmd s,\rmd x)\right\|^{2}\\
&+5\mathbf{E}\left\| \int_{0}^{t}\int_{|x|_U\geq
1}T(t-s)G(s,Y(s-),x){N}(\rmd s,\rmd x)\right\|^{2}\\
\le & 5 \|T(t-r)-Id\|^2\cdot \mathbf{E}\|Y(r)\|^2\\
&+5 M^2 \rme^{2\delta(t-r)} \int_r^t 1 \rmd s \cdot \int_r^t \mathbf{E}\|f(s,Y(s-))\|^2\rmd s\\
&+5 M^2 \rme^{2\delta(t-r)} \int_r^t  \mathbf{E} \|g(s,Y(s-))\|^2\rmd s \\
&+5 M^2 \rme^{2\delta(t-r)}  \int_{r}^{t} \int_{|x|_U<1} \mathbf{E} \|F(s,Y(s-),x)\|^2\nu(\rmd x)\rmd s\\
&+10 M^2 \rme^{2\delta(t-r)} \int_{r}^{t}   \int_{|x|_U\geq
1} \mathbf{E} \|G(s,Y(s-),x)\|^2 \nu(\rmd x)\rmd s\\
&+ 10  M^2 \rme^{2\delta(t-r)} \int_{r}^{t}   \int_{|x|_U\geq 1} 1
\nu(\rmd x)\rmd s \cdot \int_{r}^{t}  \int_{|x|_U\geq 1} \mathbf{E}
\|G(s,Y(s-),x)\|^2 \nu(\rmd x)\rmd s.
\end{align*}
Note that $\|T(t-r)-Id\|^2\to 0$ when $t\to r$, it follows from the
Lipschitz property of $f$, the almost periodicity of $f$ in $s$ and
the $\cal L^2$-boundedness of  $Y(\cdot)$ that
\begin{align*}
\sup_{s\in \R}\|f(s,Y(s))\|_2  \le & \sup_{s\in\R}\|f(s,Y(s)) - f(s, 0)\|_2 + \sup_{s\in\R}\|f(s,0)\|_2 \\
\le &  \sqrt{L}\sup_{s\in\R} \|Y(s)\|_2+ \sup_{s\in\R}\|f(s,0)\|_2 <
\infty;
\end{align*}
similarly
\[
\sup_{s\in \R}\|g(s,Y(s))\|_2 < \infty.
\]
By the Lipschitz property and Poisson almost periodicity of $F$, and
the $\cal L^2$-boundedness of  $Y(\cdot)$, we have
\begin{align*}
\sup_{s\in \R} \int_{|x|_U<1} \mathbf{E} \|F(s,Y(s),x) \nu(\rmd
x)\|^2  \le & \sup_{s\in\R} \int_{|x|_U<1} \mathbf{E}\|F(s,Y(s),x) -
F(s, 0,x)\|^2
\nu(\rmd x)\\
& + \sup_{s\in\R} \int_{|x|_U<1} \mathbf{E}\|F(s,0,x)\|^2 \nu(\rmd x) \\
\le &  L \sup_{s\in\R} \mathbf{E}\|Y(s)\|^2+ M (0) < \infty;
\end{align*}
similarly,
\begin{align*}
\sup_{s\in \R} \int_{|x|_U\ge 1} \mathbf{E} \|G(s,Y(s),x) \nu(\rmd
x)\|^2  < \infty.
\end{align*}
Therefore,
\[
\mathbf{E} \|Y(t)-Y(r)\|^2 \to 0 \qquad \hbox{当 } t\to r.
\]
That is, $Y(\cdot)$ is $\cal L^2$-continuous.
\end{proof}

\begin{thm}\label{thnon1}
Let (H1)-(H3) be satisfied.  Then \eqref{dxr} has a unique $\cal
L^2$-bounded mild solution, provided
\begin{equation}\label{Bar}
\frac{1+2b}{\omega^{2}} + \frac{2}{\omega}<\frac{1}{16K^{2}L}.
\end{equation}
\end{thm}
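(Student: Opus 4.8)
The plan is to reformulate the existence and uniqueness of an $\mathcal{L}^2$-bounded mild solution as a fixed-point problem for a ``Green's function'' operator and to apply the Banach contraction principle. Let $\mathcal{B}$ be the space of all $\mathcal{F}_t$-progressively measurable, $\mathcal{L}^2$-bounded processes $Y:\mathbb{R}\to\mathcal{L}^2(\mathbf{P},\mathbb{H})$, equipped with the norm $\|Y\|_\infty=\sup_{t\in\mathbb{R}}\|Y(t)\|_2$; this is a Banach space. Writing $J=I-P$, I would first replace the forward representation \eqref{solutionr} by a two-sided one: applying $P$ to \eqref{solutionr} and letting $r\to-\infty$ removes the term $T(t-r)PY(r)$ (bounded by $K\rme^{-\omega(t-r)}\|Y\|_\infty\to0$), while applying $J$, rewriting \eqref{solutionr} with the pair of times $t\le\tau$, inverting $T(\tau-t)$ on $R(J)$ and letting $\tau\to+\infty$ removes the term $T(t-\tau)JY(\tau)$ (bounded by $K\rme^{-\omega(\tau-t)}\|Y\|_\infty\to0$). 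This shows that $Y\in\mathcal{B}$ is a mild solution of \eqref{dxr} if and only if $Y$ is a fixed point of the operator
\[
(\Lambda Y)(t)=\int_{-\infty}^{t}T(t-s)Pf(s,Y(s-))\,\rmd s-\int_{t}^{+\infty}T(t-s)Jf(s,Y(s-))\,\rmd s+(\text{the analogous }g,F,G\text{ integrals}),
\]
where the $g$-integrals carry $\rmd W(s)$, the $F$-integrals $\int_{|x|_U<1}\cdots\widetilde{N}(\rmd s,\rmd x)$, and the $G$-integrals $\int_{|x|_U\ge1}\cdots N(\rmd s,\rmd x)$; the resulting eight integrals all converge thanks to the exponential weights in \eqref{ed}.

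By Banach's theorem it suffices to show that $\Lambda 0\in\mathcal{B}$ and that $\Lambda$ is a strict contraction on $\mathcal{B}$ for $\|\cdot\|_\infty$. For $\Lambda 0$ one estimates each of the eight integrals in $\|\cdot\|_2$ using \eqref{ed}: the drift integrals by $\frac{K}{\omega}\sup_s\|f(s,0)\|_2$; the Wiener integrals, via It\^o's isometry, by $\frac{K}{\sqrt{2\omega}}\sup_s\|g(s,0)Q^{1/2}\|$; the small-jump integrals, via the Poisson isometry, by $\frac{K}{\sqrt{2\omega}}\big(\sup_s\int_{|x|_U<1}\mathbf{E}\|F(s,0,x)\|^2\nu(\rmd x)\big)^{1/2}$; and the large-jump integrals by first writing $N=\widetilde{N}+\rmd s\,\nu(\rmd x)$, treating the $\widetilde{N}$-part as before and the compensator part by Cauchy--Schwarz, which introduces the factor $\sqrt{b}$ with $b=\int_{|x|_U\ge1}\nu(\rmd x)$. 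All these suprema are finite: $f(\cdot,0)$ and $g(\cdot,0)$ are almost periodic, hence bounded, by (H2), and the Poisson $\mathcal{L}^2$-norms of $F(\cdot,0,\cdot)$ and $G(\cdot,0,\cdot)$ are finite by (H2) together with Proposition~\ref{lem}(3). Progressive measurability of $\Lambda 0$ is routine, so $\Lambda 0\in\mathcal{B}$.

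The heart of the argument is the contraction estimate. For $Y,Z\in\mathcal{B}$ one writes $(\Lambda Y-\Lambda Z)(t)$ as the sum of the eight integrals applied to the increments $f(s,Y(s-))-f(s,Z(s-))$, $g(s,Y(s-))-g(s,Z(s-))$, and so on, and uses $\|\sum_{i=1}^{8}u_i\|^2\le 8\sum_{i=1}^{8}\|u_i\|^2$. Combining \eqref{ed}, It\^o's and the Poisson isometry, the splitting $N=\widetilde{N}+\rmd s\,\nu(\rmd x)$ with Cauchy--Schwarz on the compensator part, and finally the Lipschitz bounds (H3), one finds that the two drift integrals each contribute $\frac{K^2L}{\omega^2}\|Y-Z\|_\infty^2$, the two Wiener and the two small-jump integrals each contribute $\frac{K^2L}{2\omega}\|Y-Z\|_\infty^2$, and the two large-jump integrals each contribute $\big(\frac{K^2L}{\omega}+\frac{2K^2bL}{\omega^2}\big)\|Y-Z\|_\infty^2$; summing these and multiplying by $8$ yields
\[
\mathbf{E}\|(\Lambda Y)(t)-(\Lambda Z)(t)\|^2\le 16K^2L\Big(\frac{1+2b}{\omega^2}+\frac{2}{\omega}\Big)\|Y-Z\|_\infty^2
\]
for every $t\in\mathbb{R}$. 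Taking the supremum over $t$ and invoking \eqref{Bar} shows $\Lambda$ is a contraction on $\mathcal{B}$, hence it has a unique fixed point, which by the equivalence above is the unique $\mathcal{L}^2$-bounded mild solution of \eqref{dxr}.

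I expect the main obstacle to be the bookkeeping in the contraction estimate: arranging that the eight pieces assemble into exactly $16K^2L\big(\tfrac{1+2b}{\omega^2}+\tfrac{2}{\omega}\big)$, in particular handling the large-jump term through the decomposition $N=\widetilde{N}+\rmd s\,\nu$ and the Cauchy--Schwarz step that produces $b$. A secondary, more conceptual point is to make precise the meaning of the forward stochastic integrals $\int_{t}^{+\infty}\cdots$ occurring in $\Lambda$ and the progressive measurability of $\Lambda Y$; as usual these are controlled through the $\mathcal{L}^2$-isometries together with the exponential weights from \eqref{ed}, so they do not affect the estimates above.
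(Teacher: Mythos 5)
Your proposal is correct and follows essentially the same route as the paper: the same two-sided "Green's function" operator built from the dichotomy projections $P$ and $J$, the same split into eight integrals estimated via the exponential bounds \eqref{ed}, It\^o's isometry, the Poisson isometry, the decomposition $N=\widetilde N+\rmd s\,\nu$ with Cauchy--Schwarz, and the Lipschitz hypotheses, yielding exactly the contraction constant $16K^2L\bigl(\frac{1+2b}{\omega^2}+\frac{2}{\omega}\bigr)$ and concluding by the Banach fixed point theorem under \eqref{Bar}. Your explicit verification that the operator maps the space of $\mathcal L^2$-bounded processes into itself and the limiting argument ($r\to-\infty$, $\tau\to+\infty$) justifying the equivalence with \eqref{solutionr} are details the paper states more briefly, but the substance is identical.
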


\begin{proof}

Firstly, note that if $Y:\mathbb R\to \mathcal{L}^{2}(\mathbf{P},
\mathbb{H})$ is bounded, i.e. $\|Y\|_{\infty} <\infty$, then $Y(t)$
is a mild solution of \eqref{dxr} if and only if it staisfies the
following integral equation
\begin{align}\label{solur}
Y(t)=&\int_{-\infty}^tT(t-s)Pf(s,Y(s-))\rmd
s+\int_{-\infty}^tT(t-s)Pg(s,Y(s-))\rmd W(s)\nonumber\\
&+\int_{-\infty}^{t}\int_{|x|_U<1}T(t-s)PF(s,Y(s-),x)\widetilde{N}(\rmd s,\rmd x)\nonumber\\
&+\int_{-\infty}^{t}\int_{|x|_U\geq 1}T(t-s)PG(s,Y(s-),x){N}(\rmd
s,\rmd x)\nonumber
\\
&-\int_{t}^{+\infty} T(t-s)Jf(s,Y(s-))\rmd
s-\int_{t}^{+\infty} T(t-s)Jg(s,Y(s-))\rmd W(s)\nonumber\\
&-\int_{t}^{+\infty}\int_{|x|_U<1}T(t-s)JF(s,Y(s-),x)\widetilde{N}(\rmd s,\rmd x)\nonumber\\
&-\int_{t}^{+\infty}\int_{|x|_U\geq 1}T(t-s)JG(s,Y(s-),x){N}(\rmd
s,\rmd x).
\end{align}

Consider the nonlinear operator $\mathcal S$ acting on the Banach
space $C_b(\mathbb R;\mathcal{L}^2(\mathbf{P},\mathbb{H}))$ which is
given by
\begin{eqnarray*}
(\mathcal{S}Y)(t)&:=&\int_{-\infty}^tT(t-s)Pf(s,Y(s-))\rmd
s+\int_{-\infty}^tT(t-s)Pg(s,Y(s-))\rmd W(s)\\
&&+\int_{-\infty}^{t}\int_{|x|_U<1}T(t-s)PF(s,Y(s-),x)\widetilde{N}(\rmd s,\rmd x) \\
&&+\int_{-\infty}^{t}\int_{|x|_U\geq 1}T(t-s)PG(s,Y(s-),x){N}(\rmd
s,\rmd x)
\\
&&-\int_{t}^{+\infty} T(t-s)Jf(s,Y(s-))\rmd
s-\int_{t}^{+\infty} T(t-s)Jg(s,Y(s-))\rmd W(s)\nonumber\\
&&-\int_{t}^{+\infty}\int_{|x|_U<1}T(t-s)JF(s,Y(s-),x)\widetilde{N}(\rmd s,\rmd x)\nonumber\\
&&-\int_{t}^{+\infty}\int_{|x|_U\geq 1}T(t-s)JG(s,Y(s-),x){N}(\rmd
s,\rmd x)\\
& =: & (\mathcal S_1 Y)(t) + (\mathcal S_2 Y)(t)+(\mathcal S_3
Y)(t)+(\mathcal S_4 Y)(t),
\end{eqnarray*}
where
\begin{eqnarray*}
(\mathcal S_1 Y)(t)&:=&\int_{-\infty}^t T(t-s)Pf(s,Y(s-))\rmd s
-\int_{t}^{+\infty} T(t-s)Jf(s,Y(s-))\rmd s,\\
(\mathcal S_2 Y)(t)&:=&\int_{-\infty}^t T(t-s)Pg(s,Y(s-))\rmd W(s)
-\int_{t}^{+\infty} T(t-s)Jg(s,Y(s-))\rmd W(s),\\
(\mathcal S_3 Y)(t)&:=&\int_{-\infty}^t\int_{|x|_U<1}
T(t-s)PF(s,Y(s-),x)\widetilde{N}(\rmd s,\rmd x)\\
&&-\int_{t}^{+\infty}\int_{|x|_U<1}T(t-s)JF(s,Y(s-),x)\widetilde{N}(\rmd s,\rmd x),\\
(\mathcal S_4 Y)(t)&:=&\int_{-\infty}^t\int_{|x|_U\geq 1}T(t-s)PG(s,Y(s-),x)N(\rmd s,\rmd x)\\
&&-\int_{t}^{+\infty} \int_{|x|_U\geq 1}T(t-s)JG(s,Y(s-),x){N}(\rmd
s,\rmd x).
\end{eqnarray*}
Since $T(\cdot)$ satisfies the exponential dichotomy property, $f,g$
are uniformly $\cal L^2$-almost periodic, $F,G$ are uniformly
Poisson $\cal L^2$-almost periodic, and they satisfy the Lipschitz
property, it follows that the operator $\cal S$ maps $C_b(\mathbb
R;\mathcal{L}^2(\mathbf{P},\mathbb{H}))$ to itself. If $\cal S$ is
contraction mapping, then the Banach fixed point theorem yields that
\eqref{dxr} admits a unique $\cal L^2$-bounded and $\cal
L^2$-continuous mild solution.

Now we will show that $\mathcal S$ is a contraction mapping on
$C_b(\mathbb R;\mathcal{L}^2(\mathbf{P},\mathbb{H}))$. For
$Y_1,Y_2\in C_b(\mathbb R;\mathcal{L}^2(\mathbf{P},\mathbb{H}))$ and
$t\in\R$ we have
\begin{eqnarray*}
&&\mathbf{E}\|(\mathcal{S}Y_1)(t)-(\mathcal{S}Y_2)(t)\|^2\\
&=&\mathbf{E}\bigg\|\int_{-\infty}^tT(t-s)P[f(s,Y_1(s-))-f(s,Y_2(s-))]\rmd s\\
&&+\int_{-\infty}^tT(t-s)P[g(s,Y_1(s-))-g(s,Y_2(s-))]\rmd W(s)\\
&&+\int_{-\infty}^{t}\int_{|x|_U<1}T(t-s)P[F(s,Y_{1}(s-),x)-F(s,Y_{2}(s-),x)]\widetilde{N}(\rmd s,\rmd x)\\
&&+\int_{-\infty}^{t}\int_{|x|_U\geq
1}T(t-s)P[G(s,Y_{1}(s-),x)-G(s,Y_{2}(s-),x)]{N}(\rmd s,\rmd x)
        \\
&&-\int_{t}^{+\infty} T(t-s)J[f(s,Y_1(s-))-f(s,Y_2(s-))]\rmd s\\
&&-\int_{t}^{+\infty} T(t-s)J[g(s,Y_1(s-))-g(s,Y_2(s-))]\rmd W(s)\\
&&-\int_{t}^{+\infty}\int_{|x|_U<1}T(t-s)J[F(s,Y_{1}(s-),x)-F(s,Y_{2}(s-),x)]\widetilde{N}(\rmd s,\rmd x)\\
&&-\int_{t}^{+\infty}\int_{|x|_U\geq
1}T(t-s)J[G(s,Y_{1}(s-),x)-G(s,Y_{2}(s-),x)]{N}(\rmd s,\rmd
x)\bigg\|^{2}
\\
&\leq& 8\mathbf{E}\left\|\int_{-\infty}^t T(t-s)P[f(s,Y_1(s-))-f(s,Y_2(s-))]\rmd s\right\|^2\\
&&+8\mathbf{E}\left\|\int_{-\infty}^tT(t-s)P[g(s,Y_1(s-))-g(s,Y_2(s-))]\rmd W(s)\right\|^2\\
&&+8\mathbf{E}\left\|
\int_{-\infty}^{t}\int_{|x|_U<1}T(t-s)P[F(s,Y_{1}(s-),x)-F(s,Y_{2}(s-),x)]
\widetilde{N}(\rmd s,\rmd x)\right\|^{2} \\
&&+8\mathbf{E}\left\| \int_{-\infty}^{t}\int_{|x|_U\geq
1}T(t-s)P[G(s,Y_{1}(s-),x)-G(s,Y_{2}(s-),x)]{N}(\rmd s,\rmd
x)\right\|^{2}
     \\
&&+8\mathbf{E}\left\|\int_{t}^{+\infty} T(t-s)J[f(s,Y_1(s-))-f(s,Y_2(s-))]\rmd s\right\|^2\\
&&+8\mathbf{E}\left\|\int_{t}^{+\infty} T(t-s)J[g(s,Y_1(s-))-g(s,Y_2(s-))]\rmd W(s)\right\|^2\\
&&+8\mathbf{E}\left\|\int_{t}^{+\infty}
\int_{|x|_U<1}T(t-s)J[F(s,Y_{1}(s-),x)-F(s,Y_{2}(s-),x)]\widetilde{N}(\rmd s,\rmd x)\right\|^{2} \\
&&+8\mathbf{E}\left\|\int_{t}^{+\infty} \int_{|x|_U\geq
1}T(t-s)J[G(s,Y_{1}(s-),x)-G(s,Y_{2}(s-),x)]{N}(\rmd s,\rmd
x)\right\|^{2} .
\end{eqnarray*}

Similar to the proof of \cite[Theorem 3.2]{BD}, it follows from the
Cauchy-Schwarz inequality that we have the following estimates for
the 1st and the 5th term on the right hand side of the above
inequality
\begin{eqnarray*}
\mathbf{E}\left\|\int_{-\infty}^t
T(t-s)P[f(s,Y_1(s-))-f(s,Y_2(s-))]\rmd s\right\|^2&\leq&
\frac{K^2L}{\omega^2}\cdot\sup_{s\in
\mathbb{R}}\mathbf{E}\|Y_1(s)-Y_2(s)\|^2,
\\
\mathbf{E}\left\|\int_{t}^{+\infty}
T(t-s)J[f(s,Y_1(s-))-f(s,Y_2(s-))]\rmd s\right\|^2&\leq&
\frac{K^2L}{\omega^2}\cdot\sup_{s\in
\mathbb{R}}\mathbf{E}\|Y_1(s)-Y_2(s)\|^2.
\end{eqnarray*}
The 2nd and the 6th term can be estimated by
\begin{eqnarray*}
&&\mathbf{E} \left\| \int_{-\infty}^{t} T(t-s)P[g(s,Y_{1}(s-))-g(s,Y_{2}(s-))]\rmd W(s)\right\|^2\\
&\leq&K^{2} \left(\int_{-\infty}^t \rme^{-2\omega(t-s)} \mathbf{E}
\|(g(s,Y_{1}(s-))-g(s,Y_{2}(s-)))Q^{1/2}\|^2_{L(U, \mathcal L^2(\mathbf P,\mathbb H))}\rmd s\right)\\
&\leq&\frac{K^{2}L}{2\omega}\cdot\sup_{s\in \mathbb{R}}
\mathbf{E}\left\| Y_{1}(s)-Y_{2}(s)\right\|^{2}
\end{eqnarray*}
and
\begin{eqnarray*}
&&\mathbf{E} \left\| \int_{t}^{+\infty} T(t-s)J[g(s,Y_{1}(s-))-g(s,Y_{2}(s-))]\rmd W(s)\right\|^2\\
&\leq&K^{2} \left(\int_{t}^{+\infty} \rme^{2\omega(t-s)} \mathbf{E}
\|(g(s,Y_{1}(s-))-g(s,Y_{2}(s-)))Q^{1/2}\|^2_{L(U, \mathcal L^2(\mathbf P,\mathbb H))}\rmd s\right)\\
&\leq&\frac{K^{2}L}{2\omega}\cdot\sup_{s\in \mathbb{R}}
\mathbf{E}\left\| Y_{1}(s)-Y_{2}(s)\right\|^{2}.
\end{eqnarray*}
For the 3rd term, we have
\begin{eqnarray*}
&&\mathbf{E}\left\|\int_{-\infty}^{t}\int_{|x|_U<1}T(t-s)P[F(s,Y_{1}(s-),x)-F(s,Y_{2}(s-),x)]\widetilde{N}(\rmd
s,\rmd x)\right\|^{2}
\\
&\leq& K^{2} \int_{-\infty}^{t}\int_{|x|_U<1} \rme^{-2\omega(t-s)}
\mathbf{E} \left\|F(s,Y_{1}(s-),x)-F(s,Y_{2}(s-),x)\right\|^{2} \nu
(\rmd x) \rmd s
\\
&\leq& K^{2}L \int_{-\infty}^{t} \rme^{-2\omega(t-s)}  \rmd s \cdot
\sup_{s \in \mathbb{R}} \mathbf{E} \left\| Y_{1}(s)-Y_{2}(s)
\right\|^{2}
\\
&\leq& \frac{K^{2}L}{2\omega} \cdot \sup_{s \in \mathbb{R}}
\mathbf{E} \left\| Y_{1}(s)-Y_{2}(s) \right\|^{2}.
\end{eqnarray*}
For the 4th term, we have
\begin{eqnarray*}
&&\mathbf{E} \left\| \int_{-\infty}^{t}\int_{|x|_U\geq
1}T(t-s)P[G(s,Y_{1}(s-),x)-G(s,Y_{2}(s-),x)]{N}(\rmd s,\rmd
x)\right\|^{2}
\\
&\leq& 2\mathbf{E} \left\| \int_{-\infty}^{t}\int_{|x|_U\geq
1}T(t-s)
P[G(s,Y_{1}(s-),x)-G(s,Y_{2}(s-),x)]{\widetilde{N}}(\rmd s,\rmd x) \right\|^{2}  \\
&&+2\mathbf{E} \left\| \int_{-\infty}^{t}\int_{|x|_U\geq
1}T(t-s)P[G(s,Y_{1}(s-),x)-G(s,Y_{2}(s-),x)]\nu (\rmd x) \rmd s
\right\|^{2}
\\
&\leq& \frac{K^{2}L}{\omega} \cdot \sup_{s \in \mathbb{R}}
\mathbf{E} \left\| Y_{1}(s)-Y_{2}(s) \right\|^{2}
\\
&&+2K^{2} \mathbf{E} \bigg( \int_{-\infty}^{t}\int_{|x|_U\geq
1}\rme^{\frac{-\omega(t-s)}{2}} \\
&&\qquad \cdot
\rme^{\frac{-\omega(t-s)}{2}}\|G(s,Y_{1}(s-),x)-G(s,Y_{2}(s-),x)\|\nu
(\rmd x) \rmd s \bigg)^{2}
\\
&\leq& \frac{K^{2}L}{\omega} \cdot \sup_{s \in \mathbb{R}}
\mathbf{E} \left\| Y_{1}(s)-Y_{2}(s) \right\|^{2} +
2K^{2}\int_{-\infty}^{t}\rme^{-\omega(t-s)}\rmd s \cdot b
\\
&&\cdot \int_{-\infty}^{t}\int_{|x|_U\geq 1}\rme^{-\omega(t-s)}
\mathbf{E} \left\|G(s,Y_{1}(s-),x)-G(s,Y_{2}(s-),x)\right\|^{2}\nu
(\rmd x) \rmd s
\\
&\leq& (\frac{K^{2}L}{\omega}+ \frac{2K^{2}Lb}{\omega^{2}}) \cdot
\sup_{s \in \mathbb{R}} \mathbf{E} \left\| Y_{1}(s)-Y_{2}(s)
\right\|^{2}.
\end{eqnarray*}

Similarly, the 7th and the 8th term can be estimated by
\begin{eqnarray*}
&&\mathbf{E}\left\|\int_{t}^{+\infty}\int_{|x|_U<1}T(t-s)J[F(s,Y_{1}(s-),x)-F(s,Y_{2}(s-),x)]\widetilde{N}(\rmd
s,\rmd x)\right\|^{2}
\\
&\leq& \frac{K^{2}L}{2\omega} \cdot \sup_{s \in \mathbb{R}}
\mathbf{E} \left\| Y_{1}(s)-Y_{2}(s) \right\|^{2}
\end{eqnarray*}
and
\begin{eqnarray*}
&&\mathbf{E} \left\| \int_{t}^{+\infty}\int_{|x|_U\geq
1}T(t-s)J[G(s,Y_{1}(s-),x)-G(s,Y_{2}(s-),x)]{N}(\rmd s,\rmd
x)\right\|^{2}
\\
&\leq& 2\mathbf{E} \left\| \int_{t}^{+\infty}\int_{|x|_U\geq
1}T(t-s)J[G(s,Y_{1}(s-),x)-G(s,Y_{2}(s-),x)]
{\widetilde{N}}(\rmd s,\rmd x) \right\|^{2}  \\
&&+2\mathbf{E} \left\| \int_{t}^{+\infty}\int_{|x|_U\geq
1}T(t-s)J[G(s,Y_{1}(s-),x)-G(s,Y_{2}(s-),x)]\nu (\rmd x) \rmd s
\right\|^{2}
\\
&\leq& (\frac{K^{2}L}{\omega}+ \frac{2K^{2}Lb}{\omega^{2}}) \cdot
\sup_{s \in \mathbb{R}} \mathbf{E} \left\| Y_{1}(s)-Y_{2}(s)
\right\|^{2}.
\end{eqnarray*}

So for any $t\in\R$,
\[
\mathbf{E}\|(\mathcal{S}Y_1)(t)-(\mathcal{S}Y_2)(t)\|^2 \leq \left[
\frac{16K^{2}L}{\omega^{2}} (1+2b) + \frac{32K^{2}L}{\omega}\right]
\sup_{s \in \mathbb{R}} \mathbf{E} \left\| Y_{1}(s)-Y_{2}(s)
\right\|^{2} ,
\]
that is,
\begin{equation}\label{equ1r}
\|(\mathcal{S}Y_1)(t)-(\mathcal{S}Y_2)(t)\|_2^2\le\eta\cdot\sup_{s\in
\mathbb{R}}\|Y_1(s)-Y_2(s)\|_2^2,
\end{equation}
where $\eta:=\frac{16K^{2}L}{\omega^{2}} (1+2b) +
\frac{32K^{2}L}{\omega}$. Since
\begin{equation}\label{equ2r}
\sup_{s\in \mathbb{R}}\|Y_1(s)-Y_2(s)\|_2^2\le (\sup_{s\in
\mathbb{R}}\|Y_1(s)-Y_2(s)\|_2)^2,
\end{equation}
it follows from  \eqref{equ1r} and \eqref{equ2r} that for arbitrary
$t\in\mathbb R$,
\[
\|\mathcal S(Y_1)(t)-\mathcal
S(Y_2)(t)\|_2\le\sqrt{\eta}~\|Y_1-Y_2\|_{\infty}.
\]
Thus
\[
\|\mathcal{S}Y_1-\mathcal{S}Y_2\|_{\infty}=\sup_{t\in
\mathbb{R}}\|\mathcal S(Y_1)(t)-\mathcal
S(Y_2)(t)\|_2\le\sqrt{\eta}~\|Y_1-Y_2\|_{\infty}.
\]
By \eqref{Bar} and the fact $\eta<1$, $\mathcal{S}$ is a contraction
mapping on $C_b(\mathbb R;\mathcal{L}^2(\mathbf{P},\mathbb{H}))$.
Therefore, there exists a unique $v\in C_b(\mathbb
R;\mathcal{L}^2(\mathbf{P},\mathbb{H}))$ with $\mathcal{S}v=v$,
which a the unique $\cal L^2$-bounded and $\cal L^2$-continuous
solution of \eqref{dxr}.
\end{proof}

\begin{thm}\label{thnon2}
Let (H1)-(H3) be satisfied. Then the unique $\cal L^2$-bounded mild
solution of \eqref{dxr} obtained in Theorem \ref{thnon1} is almost
periodic in distribution, provided
\begin{equation}\label{Banar}
\frac{1+2b}{\omega^{2}} + \frac{2}{\omega}<\frac{1}{32K^{2}L}.
\end{equation}
\end{thm}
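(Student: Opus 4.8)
The plan is to show that the unique $\cal L^2$-bounded mild solution $v$ of \eqref{dxr} is almost periodic in distribution by first establishing that $v$ is square-mean almost periodic, and then invoking Remark \ref{aad}, which tells us that $\cal L^2$-almost periodicity implies almost periodicity in distribution. To get square-mean almost periodicity, I would set up a fixed-point argument in the smaller space $AP(\mathbb R;\mathcal L^2(\mathbf P,\mathbb H))$ rather than in $C_b(\mathbb R;\mathcal L^2(\mathbf P,\mathbb H))$. By Lemma \ref{lem2} this is a Banach space under $\|\cdot\|_\infty$, and it is a closed subspace of $C_b(\mathbb R;\mathcal L^2(\mathbf P,\mathbb H))$, so if we show that the operator $\mathcal S$ from the proof of Theorem \ref{thnon1} maps $AP(\mathbb R;\mathcal L^2(\mathbf P,\mathbb H))$ into itself, then since $\mathcal S$ is already a contraction on the larger space (under the weaker hypothesis \eqref{Bar}, which is implied by \eqref{Banar}), the unique fixed point $v$ must lie in $AP(\mathbb R;\mathcal L^2(\mathbf P,\mathbb H))$. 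Thus $v$ is square-mean almost periodic and hence almost periodic in distribution.

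The crux, therefore, is the invariance claim: if $Y\in AP(\mathbb R;\mathcal L^2(\mathbf P,\mathbb H))$ then $\mathcal S Y\in AP(\mathbb R;\mathcal L^2(\mathbf P,\mathbb H))$. Since $\mathcal S Y = \mathcal S_1 Y+\mathcal S_2 Y+\mathcal S_3 Y+\mathcal S_4 Y$ and $AP$ is closed under finite sums, it suffices to treat each of the four convolution-type operators. For $\mathcal S_1$, by Proposition \ref{aacom} the composition $s\mapsto f(s,Y(s))$ is square-mean almost periodic; for $\mathcal S_2$ one argues that $s\mapsto g(s,Y(s))$ is square-mean almost periodic (using (H2), (H3) and an analogue of Proposition \ref{aacom} for the $g$-term, noting the trace-class assumption on $Q$); for $\mathcal S_3$ and $\mathcal S_4$, Proposition \ref{th} gives that $(s,x)\mapsto F(s,Y(s),x)$ and $(s,x)\mapsto G(s,Y(s),x)$ are Poisson square-mean almost periodic. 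Then the standard argument shows that the two-sided stochastic convolution of an almost periodic integrand against the exponential-dichotomy semigroup, the $Q$-Wiener process, the compensated Poisson measure, and the Poisson measure respectively, is again square-mean almost periodic: given a sequence $\{s'_n\}$, extract a common subsequence $\{s_n\}$ along which all four integrands (and $Y$ itself) converge uniformly in the appropriate square-mean sense to limits $\tilde f,\tilde g,\tilde F,\tilde G$, then use the stationarity of the increments of the two-sided L\'evy process (the process $\widetilde L(t)=L(t+s_n)-L(s_n)$ has the same law as $L$, as remarked after Proposition \ref{lid}) together with It\^o's isometry, the Poisson-integral isometry, Cauchy--Schwarz, and the exponential bounds \eqref{ed} to conclude that $\sup_{t}\mathbf E\|(\mathcal S Y)(t+s_n)-(\widetilde{\mathcal S}Y)(t)\|^2\to 0$, where $\widetilde{\mathcal S}$ is built from $\tilde f,\tilde g,\tilde F,\tilde G$.

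The main obstacle I anticipate is the rigorous handling of the stochastic-integral terms $\mathcal S_2,\mathcal S_3,\mathcal S_4$ under the time shift: unlike the deterministic Bochner integral in $\mathcal S_1$, one cannot simply substitute variables, because shifting $t\mapsto t+s_n$ also shifts the driving noise. The standard device is to rewrite $(\mathcal S_2 Y)(t+s_n)=\int_{-\infty}^{t}T(t-s)Pg(s+s_n,Y(s+s_n-))\,\rmd W_{s_n}(s)-\int_t^{+\infty}T(t-s)Jg(s+s_n,Y(s+s_n-))\,\rmd W_{s_n}(s)$ where $W_{s_n}(s):=W(s+s_n)-W(s_n)$ is, by stationarity of increments, a $Q$-Wiener process with the same law as $W$; the integral therefore has the same distribution as the one with $W$ in place of $W_{s_n}$ and with the shifted integrand, and one then estimates the $\cal L^2$-distance to the limiting object by splitting the integrand difference into $g(s+s_n,Y(s+s_n-))-\tilde g(s,Y(s-))$ and controlling it via the uniform convergence of $g(\cdot+s_n,\cdot)$ to $\tilde g$, the Lipschitz bound (H3), and $\|Y(\cdot+s_n)-Y(\cdot)\|_\infty\to 0$. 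The same bookkeeping, with the extra large-jump term in $\mathcal S_4$ split into a compensated part plus a drift part exactly as in the proof of Theorem \ref{thnon1}, yields the estimate; the contraction constant coming out is $\tilde\eta:=\frac{32K^2L}{\omega^2}(1+2b)+\frac{64K^2L}{\omega}<1$ precisely under \eqref{Banar} (the extra factor of $2$ relative to \eqref{Bar} arising from the need to also absorb the $\|Y(\cdot+s_n)-Y(\cdot)\|$ contributions), which is why the stronger smallness condition is required. Finally, one checks $\cal L^2$-continuity of $\widetilde{\mathcal S}Y$ (which follows as in Lemma \ref{leml2}), so the limit process is a genuine element of $AP(\mathbb R;\mathcal L^2(\mathbf P,\mathbb H))$, completing the invariance and hence the proof.
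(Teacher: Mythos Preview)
Your proposal has a genuine gap, and it is precisely at the point you flag as ``the main obstacle''. You claim that the stochastic convolutions $\mathcal S_2 Y$, $\mathcal S_3 Y$, $\mathcal S_4 Y$ are square-mean almost periodic whenever $Y$ is, and you try to justify this by the shift $W_{s_n}(s)=W(s+s_n)-W(s_n)$ having the same law as $W$. But ``same law'' only yields equality \emph{in distribution}, not in $\mathcal L^2$. Concretely, $(\mathcal S_2 Y)(t+s_n)$ is an It\^o integral against $W_{s_n}$, while your candidate limit $(\widetilde{\mathcal S}_2 Y)(t)$ is an integral against the original $W$; the $\mathcal L^2$-distance between two such integrals does \emph{not} go to zero merely because the integrands converge and the driving noises are identically distributed. (A clean sanity check: take $f=F=G=0$, $g$ constant, $P=I$; then $\mathcal S_2 Y$ is a stationary Ornstein--Uhlenbeck type process, and one computes directly that $\mathbf E\|(\mathcal S_2 Y)(t+\tau)-(\mathcal S_2 Y)(t)\|^2$ does not tend to $0$ as $\tau\to\infty$, so no $\mathcal L^2$-limit exists along unbounded sequences.) This is exactly the phenomenon behind the counterexamples in \cite{MR} cited in the introduction, and the reason the paper aims only for almost periodicity in distribution. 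Your fixed-point argument in $AP(\mathbb R;\mathcal L^2(\mathbf P,\mathbb H))$ therefore breaks down: $\mathcal S$ does not map $AP$ into itself.

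The paper's proof avoids this trap. It does \emph{not} try to show $Y(\cdot+s_n)$ converges in $\mathcal L^2$. Instead, it introduces an auxiliary process $Y_n$ solving the equation with time-shifted coefficients $f(\cdot+s_n,\cdot),g(\cdot+s_n,\cdot),F(\cdot+s_n,\cdot,\cdot),G(\cdot+s_n,\cdot,\cdot)$ but driven by the \emph{original} noise $(W,N)$, so that $Y_n(t)$ and $Y(t+s_n)$ share the same law for every $t$. It also sets up the limit equation with coefficients $\tilde f,\tilde g,\tilde F,\tilde G$ and lets $\tilde Y$ be its unique $\mathcal L^2$-bounded solution (again with the original noise). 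Then one \emph{can} legitimately estimate $\sup_t\mathbf E\|Y_n(t)-\tilde Y(t)\|^2$ by It\^o isometry and the Poisson-integral isometry, because both processes are driven by the same $W$ and $N$; splitting each of the eight terms as in Theorem \ref{thnon1} and using \eqref{uap-f}--\eqref{uap-G} produces a self-referencing inequality with the contraction constant $\frac{32K^2L}{\omega^2}(1+2b)+\frac{64K^2L}{\omega}$, which is $<1$ under \eqref{Banar}. This gives $Y_n\to\tilde Y$ in $\mathcal L^2$ uniformly in $t$, hence $Y(\cdot+s_n)\to\tilde Y$ \emph{in distribution} uniformly in $t$. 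The passage through $Y_n$ is the missing idea in your outline.
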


\begin{proof}
For given sequence of real numbers $\{s'_n\}$, since  $f, g$ are
almost periodic and $F, G$ are Poisson almost periodic, there exist
a subsequence $\{s_n\}$ of $\{s'_n\}$ and and $\cal L^2$-continuous
stochastic process $\widetilde{f}$, $\widetilde{g}$ and continuous
functions $\widetilde{F}$, $\widetilde{G}$ in the sense of
Definition \ref{defap} (3), such that
\begin{equation}\label{uap-f}
\lim_{n\rightarrow \infty} \sup_{t\in\R,Y\in\cal K}
\mathbf{E}\|f(t+s_n,Y)-\widetilde{f}(t,Y)\|^2=0,
\end{equation}
\begin{equation}\label{uap-g}
\lim_{n\rightarrow \infty}\sup_{t\in\R,Y\in\cal
K}\mathbf{E}\|(g(t+s_{n},Y)-\widetilde{g}(t,Y))Q^{1/2}\|_{L(U,\mathcal{L}^{2}(\mathbf{P},
\mathbb{H}))}^2=0,
\end{equation}
\begin{align}\label{uap-F}
\lim_{n \rightarrow \infty}\sup_{t\in\R,Y\in\cal K}
\int_{|x|_U<1}\mathbf{E}
\|F(t+s_{n},Y,x)-\widetilde{F}(t,Y,x)\|^2\nu(\rmd x)&=0,
\end{align}
and
\begin{align}\label{uap-G}
\lim_{n\rightarrow \infty} \sup_{t\in\R,Y\in\cal K}
\int_{|x|_U\geq1}\mathbf{E}\|G(t+s_n,Y,x)-\widetilde{G}(t,Y,x)\|^2\nu(\rmd
x)&=0
\end{align}
hold for any bounded set $\cal K\subset\cal L^2(\mathbf P, \mathbb
H)$.

Let $\tilde Y(\cdot)$ be the solution of the following stochastic
integral equation\footnote{Note that $\widetilde{f}, \widetilde{g},
\widetilde{F}, \widetilde{G}$ are uniformly bounded in $t$ when
$Y=0$ and Lipschitz in $Y$ with the same Lipschitz constants as that
of $f,g,F,G$, so by Theorem \ref{thnon1} we know this integral
equation admits a unique $\cal L^2$-bounded solution. }
\begin{align*}
\tilde{Y}(t):=&\int_{-\infty}^tT(t-s)P\widetilde{f}(s,\tilde
Y(s-))\rmd
s+\int_{-\infty}^tT(t-s)P\widetilde{g}(s,\tilde Y(s-))\rmd W(s)\\
&+\int_{-\infty}^t\int_{|x|_U <
1}T(t-s)P\widetilde{F}(s,\tilde Y(s-),x)\widetilde{N}(\rmd s,\rmd x)\\
& +\int_{-\infty}^t\int_{|x|_U \geq 1}T(t-s)P\widetilde{G}(s,\tilde
Y(s-),x)N(\rmd
s,\rmd x)\\
& -\int^{+\infty}_tT(t-s)J\widetilde{f}(s,\tilde Y(s-))\rmd
s -\int^{+\infty}_tT(t-s)J\widetilde{g}(s,\tilde Y(s-))\rmd W(s)\\
&-\int^{+\infty}_t\int_{|x|_U <
1}T(t-s)J\widetilde{F}(s,\tilde Y(s-),x)\widetilde{N}(\rmd s,\rmd x)\\
& -\int^{+\infty}_t\int_{|x|_U \geq 1}T(t-s)J\widetilde{G}(s,\tilde
Y(s-),x)N(\rmd s,\rmd x)
\end{align*}

For each $\sigma\in\mathbb R$, denote
$W_n(\sigma):=W(\sigma+s_n)-W(s_n)$,
$N_n(\sigma,x):=N(\sigma+s_{n},x)-N(s_{n},x)$,
$\widetilde{N}_n(\sigma,x):=\widetilde{N}(\sigma+s_n,x)-\widetilde{N}(s_n,x)$.
Then $W_n$ is a $Q$-Wiener process with the same distribution as
$W$, and $N_{n}$ is a Poisson random measure with the same
distribution as $N$ and the corresponding compensated Poisson random
measure being $\widetilde{N}_n$. Let $\sigma=s-s_n$, then we have
\begin{align*}
{Y}(t+s_n)=&\int_{-\infty}^tT(t-\sigma)P{f}(\sigma+s_n,Y(\sigma+s_n-))\rmd
\sigma\\
&+\int_{-\infty}^tT(t-\sigma)P{g}(\sigma+s_n,Y(\sigma+s_n-))\rmd W_n(\sigma)\\
&+\int_{-\infty}^t\int_{|x|_U <
1}T(t-\sigma)P{F}(\sigma+s_n,Y(\sigma+s_n-),x)\widetilde{N}_n(\rmd \sigma,\rmd x)\\
&+\int_{-\infty}^t\int_{|x|_U \geq
1}T(t-\sigma)P{G}(\sigma,Y(\sigma+s_n-),x)N_n(\rmd
\sigma,\rmd x)\\
&-\int^{+\infty}_t T(t-\sigma)J{f}(\sigma+s_n,Y(\sigma+s_n-))\rmd
\sigma\\
&- \int^{+\infty}_t T(t-\sigma)J{g}(\sigma+s_n,Y(\sigma+s_n-))\rmd W_n(\sigma)\\
&-\int^{+\infty}_t\int_{|x|_U <
1}T(t-\sigma)J{F}(\sigma+s_n,Y(\sigma+s_n-),x)\widetilde{N}_n(\rmd \sigma,\rmd x)\\
&-\int^{+\infty}_t \int_{|x|_U \geq
1}T(t-\sigma)J{G}(\sigma,Y(\sigma+s_n-),x)N_n(\rmd \sigma,\rmd x).
\end{align*}
Consider the stochastic process $Y_n(\cdot)$ which satisfies the
following stochastic integral equation
\begin{align*}
{Y}_n(t)=&\int_{-\infty}^tT(t-\sigma)P{f}(\sigma+s_n,Y_n(\sigma-))\rmd
\sigma\\
&+\int_{-\infty}^tT(t-\sigma)P{g}(\sigma+s_n,Y_n(\sigma-))\rmd W(\sigma)\\
&+\int_{-\infty}^t\int_{|x|_U <
1}T(t-\sigma)P{F}(\sigma+s_n,Y_n(\sigma-),x)\widetilde{N}(\rmd \sigma,\rmd x)\\
&+\int_{-\infty}^t\int_{|x|_U \geq
1}T(t-\sigma)P{G}(\sigma,Y_n(\sigma-),x)N(\rmd
\sigma,\rmd x)\\
&-\int^{+\infty}_tT(t-\sigma)J{f}(\sigma+s_n,Y_n(\sigma-))\rmd
\sigma\\
&-\int^{+\infty}_tT(t-\sigma)J{g}(\sigma+s_n,Y_n(\sigma-))\rmd W(\sigma)\\
&-\int^{+\infty}_t\int_{|x|_U <
1}T(t-\sigma)J{F}(\sigma+s_n,Y_n(\sigma-),x)\widetilde{N}(\rmd \sigma,\rmd x)\\
&-\int^{+\infty}_t\int_{|x|_U \geq
1}T(t-\sigma)J{G}(\sigma,Y_n(\sigma-),x)N(\rmd \sigma,\rmd x).
\end{align*}
Note that $Y(t+s_n)$ and $Y_n(t)$ share the same distribution for
each $t\in\R$, the functions $f(\cdot+s_n,\cdot)$ and
$g(\cdot+s_n,\cdot)$ are uniformly square-mean almost periodic and
Lipschitz in $Y$ with the same Lipschitz constants as that of $f,g$,
and  $F(\cdot+s_n,\cdot,\cdot)$, $G(\cdot+s_n,\cdot,\cdot)$ are
uniformly Poisson square-mean almost periodic and satisfy the same
Lipschitz condition as that of $F,G$. So similar to Theorem
\ref{thnon1} this $Y_n(\cdot)$ exists, is unique and $\cal
L^2$-bounded.

It follows from It\^o's isometry and the properties of the integral
for Poisson random measures that
\begin{align*}
&\mathbf{E}\|Y_n(t)-\tilde{Y}(t)\|^2\\
\le&8\mathbf{E}\left\|
\int_{-\infty}^tT(t-\sigma)P[f(\sigma+s_n,Y_n(\sigma-))-\widetilde{f}(\sigma,\tilde Y(\sigma-))]\rmd\sigma\right\|^2\\
& + 8\mathbf{E}\left\|
\int^{+\infty}_tT(t-\sigma)J[f(\sigma+s_n,Y_n(\sigma-))-\widetilde{f}(\sigma,\tilde Y(\sigma-))]\rmd\sigma\right\|^2\\
&+8\mathbf{E}\left\|
\int_{-\infty}^tT(t-\sigma)P[g(\sigma+s_n,Y_n(\sigma-))-\widetilde{g}(\sigma,\tilde Y(\sigma-))]\rmd{W}(\sigma)\right\|^2\\
&+8\mathbf{E}\left\|
\int^{+\infty}_tT(t-\sigma)J[g(\sigma+s_n,Y_n(\sigma-))-\widetilde{g}(\sigma,\tilde Y(\sigma-))]\rmd{W}(\sigma)\right\|^2\\
&+8\mathbf{E} \left\|\int_{-\infty}^{t} \int_{|x|_U<1}T(t-\sigma)P
[F(\sigma +s_{n},Y_n(\sigma-),x)-\widetilde{F}(\sigma, \tilde Y(\sigma-),x)]\widetilde{N}(\rmd \sigma,\rmd x) \right\|^{2} \\
&+8\mathbf{E} \left\|\int^{+\infty}_{t} \int_{|x|_U<1}T(t-\sigma)J
[F(\sigma +s_{n},Y_n(\sigma-),x)-\widetilde{F}(\sigma, \tilde Y(\sigma-),x)]\widetilde{N}(\rmd \sigma,\rmd x) \right\|^{2} \\
&+8\mathbf{E} \left\|\int_{-\infty}^{t} \int_{|x|_U\geq
1}T(t-\sigma)P
[G(\sigma +s_{n},Y_n(\sigma-),x)-\widetilde{G}(\sigma, \tilde Y(\sigma-),x)]{N}(\rmd \sigma,\rmd x) \right\|^{2}\\
&+8\mathbf{E} \left\|\int^{+\infty}_{t} \int_{|x|_U\geq
1}T(t-\sigma)J
[G(\sigma +s_{n},Y_n(\sigma-),x)-\widetilde{G}(\sigma, \tilde Y(\sigma-),x)]{N}(\rmd \sigma,\rmd x) \right\|^{2}\\
=: & I_1 + I_2 + I_3 +I_4.
\end{align*}

By the Cauchy-Schwarz inequality we have the following estimate for
$I_1$:
\begin{align*}
I_1 \le  & 16\mathbf{E}\left\|
\int_{-\infty}^tT(t-\sigma)P[f(\sigma+s_n,Y_n(\sigma-))-f(\sigma+s_n,\tilde Y(\sigma-))]\rmd\sigma\right\|^2\\
 & + 16\mathbf{E}\left\|
\int_{-\infty}^tT(t-\sigma)P[f(\sigma+s_n,\tilde Y(\sigma-))-\widetilde{f}(\sigma,\tilde Y(\sigma-))]\rmd\sigma\right\|^2\\
& +16\mathbf{E}\left\|
\int^{+\infty}_tT(t-\sigma)J[f(\sigma+s_n,Y_n(\sigma-))-f(\sigma+s_n,\tilde Y(\sigma-))]\rmd\sigma\right\|^2\\
 & + 16\mathbf{E}\left\|
\int^{+\infty}_tT(t-\sigma)J[f(\sigma+s_n,\tilde Y(\sigma-))-\widetilde{f}(\sigma,\tilde Y(\sigma-))]\rmd\sigma\right\|^2\\
\le & 16
\mathbf{E}\left(\int_{-\infty}^t\|T(t-\sigma)P\|\cdot\|f(\sigma+s_n,Y_n(\sigma-))-f(\sigma+s_n,\tilde
Y(\sigma-))\|
\rmd\sigma\right)^2\\
&+ 16
\mathbf{E}\left(\int_{-\infty}^t\|T(t-\sigma)P\|\cdot\|f(\sigma+s_n,\tilde
Y(\sigma-))-\widetilde{f}(\sigma,\tilde Y(\sigma-))\|
\rmd\sigma\right)^2\\
& +16
\mathbf{E}\left(\int^{+\infty}_t\|T(t-\sigma)J\|\cdot\|f(\sigma+s_n,Y_n(\sigma-))-f(\sigma+s_n,\tilde
Y(\sigma-))\|
\rmd\sigma\right)^2\\
&+ 16
\mathbf{E}\left(\int^{+\infty}_t\|T(t-\sigma)J\|\cdot\|f(\sigma+s_n,\tilde
Y(\sigma-))-\widetilde{f}(\sigma,\tilde Y(\sigma-))\|
\rmd\sigma\right)^2\\
\le & 16 K^2 \int_{-\infty}^t \rme^{\frac{-\omega(t-\sigma)}{2}\cdot
2}
\rmd \sigma \\
& \quad\times \int_{-\infty}^t
\rme^{\frac{-\omega(t-\sigma)}{2}\cdot 2}\cdot
\mathbf{E}\|f(\sigma+s_n,Y_n(\sigma-))-f(\sigma+s_n,\tilde Y(\sigma-))\|^2 \rmd \sigma\\
&+ 16 K^2 \int_{-\infty}^t \rme^{\frac{-\omega(t-\sigma)}{2}\cdot 2}
\rmd \sigma \\
&\quad\times \int_{-\infty}^t \rme^{\frac{-\omega(t-\sigma)}{2}\cdot
2}\cdot
\mathbf{E}\|f(\sigma+s_n,\tilde Y(\sigma-))-\widetilde{f}(\sigma,\tilde Y(\sigma-))\|^2 \rmd \sigma\\
&+ 16 K^2 \int^{+\infty}_t \rme^{\frac{\omega(t-\sigma)}{2}\cdot 2}
\rmd \sigma \\
&\quad\times \int^{+\infty}_t \rme^{\frac{\omega(t-\sigma)}{2}\cdot
2}\cdot
\mathbf{E}\|f(\sigma+s_n,Y_n(\sigma-))-f(\sigma+s_n,\tilde Y(\sigma-))\|^2 \rmd \sigma\\
&+ 16 K^2 \int^{+\infty}_t \rme^{\frac{\omega(t-\sigma)}{2}\cdot 2}
\rmd \sigma \\
&\quad\times \int^{+\infty}_t \rme^{\frac{\omega(t-\sigma)}{2}\cdot
2}\cdot
\mathbf{E}\|f(\sigma+s_n,\tilde Y(\sigma-))-\widetilde{f}(\sigma,\tilde Y(\sigma-))\|^2 \rmd \sigma\\
\le & \frac{16K^2L}{\omega} \int_{-\infty}^t
\rme^{-\omega(t-\sigma)}\cdot
\mathbf{E}\|Y_n(\sigma-)-\tilde Y(\sigma-)\|^2 \rmd \sigma \\
& +\frac{16K^2L}{\omega} \int^{+\infty}_t
\rme^{\omega(t-\sigma)}\cdot
\mathbf{E}\|Y_n(\sigma-)-\tilde Y(\sigma-)\|^2 \rmd \sigma + \cal E_1^n(t)\\
\le & \frac{32K^2L}{\omega^2} \sup_{\sigma\in\R}
\mathbf{E}\|Y_n(\sigma)-\tilde Y(\sigma)\|^2 + \cal E_1^n (t),
\end{align*}
where
\begin{align*}
\cal E_1^n (t):=& \frac{16K^2}{\omega} \int_{-\infty}^t
\rme^{-\omega(t-\sigma)}\cdot\mathbf{E}\|f(\sigma+s_n,\tilde Y(\sigma-))-\widetilde{f}(\sigma,\tilde Y(\sigma-))\|^2 \rmd \sigma\\
& + \frac{16K^2}{\omega} \int^{+\infty}_t
\rme^{\omega(t-\sigma)}\cdot\mathbf{E}\|f(\sigma+s_n,\tilde
Y(\sigma-))-\widetilde{f}(\sigma,\tilde Y(\sigma-))\|^2 \rmd \sigma.
\end{align*}
By \eqref{uap-f}and the $\cal L^2$-boundedness of  $\tilde
Y(\cdot)$, we have $\lim_{n\to\infty}\sup_{t\in\R}\cal E_1^n(t) =0$.

For $I_2$, by It\^o's isometry we have:
\begin{align*}
I_2 \le & 16\mathbf{E}\left\|
\int_{-\infty}^tT(t-\sigma)P[g(\sigma+s_n,Y_n(\sigma-))-g(\sigma+s_n,\tilde Y(\sigma-))]\rmd W(\sigma)\right\|^2\\
 & + 16\mathbf{E}\left\|
\int_{-\infty}^tT(t-\sigma)P[g(\sigma+s_n,\tilde Y(\sigma-))-\widetilde{g}(\sigma,\tilde Y(\sigma-))]\rmd W(\sigma)\right\|^2\\
&+ 16\mathbf{E}\left\|
\int^{+\infty}_tT(t-\sigma)J[g(\sigma+s_n,Y_n(\sigma-))-g(\sigma+s_n,\tilde Y(\sigma-))]\rmd W(\sigma)\right\|^2\\
 & + 16\mathbf{E}\left\|
\int^{+\infty}_tT(t-\sigma)J[g(\sigma+s_n,\tilde
Y(\sigma-))-\widetilde{g}(\sigma,\tilde Y(\sigma-))]\rmd
W(\sigma)\right\|^2
\\
\le & 16  \mathbf{E}\int_{-\infty}^t
\|T(t-\sigma)P\|^2\cdot\|g(\sigma+s_n,Y_n(\sigma-))-g(\sigma+s_n,\tilde
Y(\sigma-))Q^{1/2}\|^2
\rmd\sigma\\
&+ 16 \mathbf{E}
\int_{-\infty}^t\|T(t-\sigma)P\|^2\cdot\|g(\sigma+s_n,\tilde
Y(\sigma-))-\widetilde{g}(\sigma,\tilde Y(\sigma-))Q^{1/2}\|^2
\rmd\sigma \\
&+16  \mathbf{E}\int^{+\infty}_t
\|T(t-\sigma)J\|^2\cdot\|g(\sigma+s_n,Y_n(\sigma-))-g(\sigma+s_n,\tilde
Y(\sigma-))Q^{1/2}\|^2
\rmd\sigma\\
&+ 16 \mathbf{E}
\int^{+\infty}_t\|T(t-\sigma)J\|^2\cdot\|g(\sigma+s_n,\tilde
Y(\sigma-))-\widetilde{g}(\sigma,\tilde Y(\sigma-))Q^{1/2}\|^2
\rmd\sigma \\
\le & 16 K^2L  \int_{-\infty}^t \rme^{-2\omega(t-\sigma)}
\cdot \mathbf{E}\|Y_n(\sigma-)-\tilde Y(\sigma-)\|^2 \rmd \sigma\\
& + 16 K^2L  \int^{+\infty}_t \rme^{2\omega(t-\sigma)} \cdot
\mathbf{E}\|Y_n(\sigma-)-\tilde Y(\sigma-)\|^2 \rmd \sigma
+ \cal E_2^n (t)\\
\le & \frac{16K^2L}{\omega} \sup_{\sigma\in\R}
\mathbf{E}\|Y_n(\sigma)-\tilde Y(\sigma)\|^2 +  \cal E_2^n(t)
\end{align*}
with
\begin{align*}
\cal E_2^n(t) := & 16 K^2 \int_{-\infty}^t \rme^{-2\omega(t-\sigma)}
\cdot \mathbf{E}\|g(\sigma+s_n,\tilde Y(\sigma-))-\widetilde{g}(\sigma,\tilde Y(\sigma-))Q^{1/2}\|^2 \rmd \sigma \\
& + 16 K^2 \int^{+\infty}_t \rme^{2\omega(t-\sigma)} \cdot
\mathbf{E}\|g(\sigma+s_n,\tilde
Y(\sigma-))-\widetilde{g}(\sigma,\tilde Y(\sigma-))Q^{1/2}\|^2 \rmd
\sigma.
\end{align*}
Similar to $\cal E_1^n$,  we have
\begin{align*}
\cal E_2^n(t) \to 0 \qquad \hbox{as } n\to \infty
\end{align*}
uniformly in $t\in\R$.

By the properties of the integral for Poisson random measures and
\eqref{F} we have
\begin{align*}
I_3 \le & 16 \mathbf{E}\bigg\|\int_{-\infty}^{t}\int_{|x|_U<1}T(t-\sigma)P[F(\sigma +s_{n},Y_n(\sigma-),x)\\
& \qquad -F(\sigma+s_n, \tilde Y(\sigma-),x)]\widetilde{N}(\rmd
\sigma,\rmd x)\bigg\|^{2}
\\
& + 16 \mathbf{E}\bigg\|\int_{-\infty}^{t}\int_{|x|_U<1}T(t-\sigma)P[F(\sigma +s_{n},\tilde Y(\sigma-),x)\\
&\qquad -\widetilde{F}(\sigma, \tilde
Y(\sigma-),x)]\widetilde{N}(\rmd \sigma,\rmd x)\bigg\|^{2}
\\
& + 16 \mathbf{E}\bigg\|\int^{+\infty}_{t}\int_{|x|_U<1}T(t-\sigma)J[F(\sigma +s_{n},Y_n(\sigma-),x)\\
& \qquad -F(\sigma+s_n, \tilde Y(\sigma-),x)]\widetilde{N}(\rmd
\sigma,\rmd x)\bigg\|^{2}
\\
& + 16 \mathbf{E}\bigg\|\int^{+\infty}_{t}\int_{|x|_U<1}T(t-\sigma)J[F(\sigma +s_{n},\tilde Y(\sigma-),x)\\
&\qquad -\widetilde{F}(\sigma, \tilde
Y(\sigma-),x)]\widetilde{N}(\rmd \sigma,\rmd x)\bigg\|^{2}
\\
\leq & 16 K^{2} \int_{-\infty}^{t}\int_{|x|_U<1}
\rme^{-2\omega(t-\sigma)}
\mathbf{E} \bigg\|F(\sigma +s_{n},Y_n(\sigma-),x)\\
& \qquad -F(\sigma+s_n, \tilde Y(\sigma-),x)\bigg\|^{2} \nu (\rmd x)
\rmd \sigma
\\
&+ 16 K^{2} \int_{-\infty}^{t}\int_{|x|_U<1}
\rme^{-2\omega(t-\sigma)}
\mathbf{E} \bigg\|F(\sigma +s_{n},\tilde Y(\sigma-),x)\\
&\qquad -\widetilde F(\sigma, \tilde Y(\sigma-),x)\bigg\|^{2} \nu
(\rmd x) \rmd \sigma\\
&+16 K^{2} \int^{+\infty}_{t}\int_{|x|_U<1} \rme^{2\omega(t-\sigma)}
\mathbf{E} \bigg\|F(\sigma +s_{n},Y_n(\sigma-),x)\\
& \qquad -F(\sigma+s_n, \tilde Y(\sigma-),x)\bigg\|^{2} \nu (\rmd x)
\rmd \sigma
\\
&+ 16 K^{2} \int^{+\infty}_{t}\int_{|x|_U<1}
\rme^{2\omega(t-\sigma)}
\mathbf{E} \bigg\|F(\sigma +s_{n},\tilde Y(\sigma-),x)\\
&\qquad -\widetilde{F}(\sigma, \tilde Y(\sigma-),x)\bigg\|^{2} \nu
(\rmd x) \rmd \sigma
\\
\leq & 16K^{2}L \int_{-\infty}^{t} \rme^{-2\omega(t-\sigma)}\cdot \mathbf{E} \| Y_{n}(\sigma-)-\tilde Y(\sigma-)\|^{2} \rmd \sigma\\
& + 16K^{2}L \int^{+\infty}_{t} \rme^{2\omega(t-\sigma)}\cdot
\mathbf{E} \| Y_{n}(\sigma-)-\tilde Y(\sigma-)\|^{2} \rmd \sigma
+\cal E_3^n (t) \\
\le & \frac{16K^2 L}{\omega} \sup_{\sigma\in\R} \mathbf{E} \|
Y_{n}(\sigma)-\tilde Y(\sigma)\|^{2} +\cal E_3^n(t),
\end{align*}
where
\begin{align*}\label{e3}
\cal E_3^n(t) :=& 16 K^{2} \int_{-\infty}^{t}\int_{|x|_U<1}
\rme^{-2\omega(t-\sigma)}
\mathbf{E} \|F(\sigma +s_{n},\tilde Y(\sigma-),x)\\
&\qquad\qquad -\widetilde{F}(\sigma, \tilde Y(\sigma-),x)\|^{2} \nu
(\rmd x) \rmd \sigma\\
&+  16 K^{2} \int^{+\infty}_{t}\int_{|x|_U<1}
\rme^{2\omega(t-\sigma)}
\mathbf{E} \|F(\sigma +s_{n},\tilde Y(\sigma-),x)\\
&\qquad\qquad -\widetilde{F}(\sigma, \tilde Y(\sigma-),x)\|^{2} \nu
(\rmd x) \rmd \sigma.
\end{align*}
By\eqref{uap-F} and the $\cal L^2$-boundedness of $\tilde Y(\cdot)$,
we have $\lim_{n\to\infty}\sup_{t\in\R} \cal E_3^n (t)=0$.

Now let us estimate the last term $I_4$. It follows from the
properties of the integral of Poisson random measures, \eqref{G} and
the Cauchy-Schwarz inequality that
\begin{align*}
I_4 \le & 16 \mathbf{E} \bigg\| \int_{-\infty}^{t}\int_{|x|_U\geq 1}T(t-\sigma)P[G(\sigma+s_n,Y_{n}(\sigma-),x)\\
&\qquad -G(\sigma+s_n,\tilde Y(\sigma-),x)]{N}(\rmd \sigma,\rmd
x)\bigg\|^{2}
\\
& + 16 \mathbf{E} \bigg\| \int_{-\infty}^{t}\int_{|x|_U\geq 1}T(t-\sigma)P[G(\sigma+s_n,\tilde Y(\sigma-),x)\\
& \qquad -\tilde G(\sigma,\tilde Y(\sigma-),x)]{N}(\rmd \sigma,\rmd x)\bigg\|^{2}\\
& +16 \mathbf{E} \bigg\| \int^{+\infty}_{t}\int_{|x|_U\geq 1}T(t-\sigma)J[G(\sigma+s_n,Y_{n}(\sigma-),x)\\
&\qquad -G(\sigma+s_n,\tilde Y(\sigma-),x)]{N}(\rmd \sigma,\rmd
x)\bigg\|^{2}
\\
& + 16 \mathbf{E} \bigg\| \int^{+\infty}_{t}\int_{|x|_U\geq 1}T(t-\sigma)J[G(\sigma+s_n,\tilde Y(\sigma-),x)\\
& \qquad -\tilde G(\sigma,\tilde Y(\sigma-),x)]{N}(\rmd \sigma,\rmd
x)\bigg\|^{2}
\\
\leq& 32\mathbf{E} \bigg\| \int_{-\infty}^{t}\int_{|x|_U\geq 1}T(t-\sigma)P[G(\sigma+s_n,Y_{n}(\sigma-),x)\\
&\qquad -G(\sigma+s_n,\tilde Y(\sigma-),x)]{\widetilde{N}}(\rmd \sigma,\rmd x) \bigg\|^{2}  \\
&+32\mathbf{E} \bigg\| \int_{-\infty}^{t}\int_{|x|_U\geq 1}T(t-\sigma)P[G(\sigma+s_n,Y_{n}(\sigma-),x)\\
&\qquad -G(\sigma+s_n,\tilde Y(\sigma-),x)]\nu (\rmd x) \rmd \sigma\bigg\|^{2}\\
&+32\mathbf{E} \bigg\| \int_{-\infty}^{t}\int_{|x|_U\geq 1}T(t-\sigma)P[G(\sigma+s_n,\tilde Y(\sigma-),x)\\
&\qquad -\tilde G(\sigma,\tilde Y(\sigma-),x)]{\widetilde{N}}(\rmd \sigma,\rmd x) \bigg\|^{2}  \\
&+32\mathbf{E} \bigg\| \int_{-\infty}^{t}\int_{|x|_U\geq 1}T(t-\sigma)P[G(\sigma+s_n,\tilde Y(\sigma-),x)\\
&\qquad -\tilde G(\sigma,\tilde Y(\sigma-),x)]\nu (\rmd x) \rmd \sigma\bigg\|^{2}\\
&+32\mathbf{E} \bigg\| \int^{+\infty}_{t}\int_{|x|_U\geq 1}T(t-\sigma)J[G(\sigma+s_n,Y_{n}(\sigma-),x)\\
&\qquad -G(\sigma+s_n,\tilde Y(\sigma-),x)]{\widetilde{N}}(\rmd \sigma,\rmd x) \bigg\|^{2}  \\
&+32\mathbf{E} \bigg\| \int^{+\infty}_{t}\int_{|x|_U\geq 1}T(t-\sigma)J[G(\sigma+s_n,Y_{n}(\sigma-),x)\\
&\qquad -G(\sigma+s_n,\tilde Y(\sigma-),x)]\nu (\rmd x) \rmd \sigma\bigg\|^{2}\\
&+32\mathbf{E} \bigg\| \int^{+\infty}_{t}\int_{|x|_U\geq 1}T(t-\sigma)J[G(\sigma+s_n,\tilde Y(\sigma-),x)\\
&\qquad -\tilde G(\sigma,\tilde Y(\sigma-),x)]{\widetilde{N}}(\rmd \sigma,\rmd x) \bigg\|^{2}  \\
&+32\mathbf{E} \bigg\| \int^{+\infty}_{t}\int_{|x|_U\geq 1}T(t-\sigma)J[G(\sigma+s_n,\tilde Y(\sigma-),x)\\
&\qquad -\tilde G(\sigma,\tilde Y(\sigma-),x)]\nu (\rmd x) \rmd
\sigma\bigg\|^{2}
\\
\le & 32K^2L \int_{-\infty}^{t}\rme^{-2\omega(t-\sigma)} \mathbf{E}\|Y_n(\sigma-)-\tilde Y(\sigma-)\|^2\rmd \sigma \\
& + 32K^2 b  \int_{-\infty}^{t}\rme^{-\omega(t-\sigma)} \rmd \sigma
\cdot \int_{-\infty}^{t}\int_{|x|_U\geq 1} \rme^{-\omega(t-\sigma)}
\mathbf{E}\| G(\sigma+s_n,Y_{n}(\sigma-),x)\\
& \quad\qquad -G(\sigma+s_n,\tilde Y(\sigma-),x)\|^2 \nu(\rmd x) \rmd \sigma\\
&+ 32K^2 \int_{-\infty}^{t}\int_{|x|_U\geq 1}
\rme^{-2\omega(t-\sigma)}
\mathbf{E}\| G(\sigma+s_n,\tilde Y(\sigma-),x)\\
&\quad\qquad -\tilde G(\sigma,\tilde Y(\sigma-),x)\|^2 \nu(\rmd x)
\rmd \sigma
\\
&+ 32K^2 b  \int_{-\infty}^{t}\rme^{-\omega(t-\sigma)} \rmd \sigma \\
& \quad \cdot \int_{-\infty}^{t}\int_{|x|_U\geq 1}
\rme^{-\omega(t-\sigma)} \mathbf{E}\| G(\sigma+s_n,\tilde
Y(\sigma-),x)-\tilde G(\sigma,\tilde Y(\sigma-),x)\|^2 \nu(\rmd x)
\rmd \sigma
\\
& + 32K^2L \int^{+\infty}_{t}\rme^{2\omega(t-\sigma)} \mathbf{E}\|Y_n(\sigma-)-\tilde Y(\sigma-)\|^2\rmd \sigma \\
& + 32K^2 b  \int^{+\infty}_{t}\rme^{\omega(t-\sigma)} \rmd \sigma
\cdot \int^{+\infty}_{t}\int_{|x|_U\geq 1} \rme^{\omega(t-\sigma)}
\mathbf{E}\| G(\sigma+s_n,Y_{n}(\sigma-),x)\\
& \quad\qquad -G(\sigma+s_n,\tilde Y(\sigma-),x)\|^2 \nu(\rmd x) \rmd \sigma\\
&+ 32K^2 \int^{+\infty}_{t}\int_{|x|_U\geq 1}
\rme^{2\omega(t-\sigma)}
\mathbf{E}\| G(\sigma+s_n,\tilde Y(\sigma-),x)\\
&\quad\qquad -\tilde G(\sigma,\tilde Y(\sigma-),x)\|^2 \nu(\rmd x)
\rmd \sigma
\\
&+ 32K^2 b  \int^{+\infty}_{t}\rme^{\omega(t-\sigma)} \rmd \sigma \\
& \quad \cdot \int^{+\infty}_{t}\int_{|x|_U\geq 1}
\rme^{\omega(t-\sigma)} \mathbf{E}\| G(\sigma+s_n,\tilde
Y(\sigma-),x)-\tilde G(\sigma,\tilde Y(\sigma-),x)\|^2 \nu(\rmd x)
\rmd \sigma
\\
\le & 32K^2L \int_{-\infty}^{t} \rme^{-2\omega(t-\sigma)}\cdot\mathbf{E}\|Y_n(\sigma-)-\tilde Y(\sigma-)\|^2\rmd \sigma \\
& + \frac{32K^2bL}{\omega}\int_{-\infty}^{t}
\rme^{-\omega(t-\sigma)}\cdot
\mathbf{E}\|Y_n(\sigma-)-\tilde Y(\sigma-)\|^2\rmd \sigma\\
& +32K^2L \int^{+\infty}_{t} \rme^{2\omega(t-\sigma)}\cdot\mathbf{E}\|Y_n(\sigma-)-\tilde Y(\sigma-)\|^2\rmd \sigma \\
& + \frac{32K^2bL}{\omega}\int^{+\infty}_{t}
\rme^{\omega(t-\sigma)}\cdot
\mathbf{E}\|Y_n(\sigma-)-\tilde Y(\sigma-)\|^2\rmd \sigma + \cal E_4^n(t)\\
\le & \left( \frac{32 K^2 L}{\omega} +  \frac{64
K^2bL}{\omega^2}\right) \sup_{\sigma\in\R}
\mathbf{E}\|Y_n(\sigma)-\tilde Y(\sigma)\|^2\rmd \sigma + \cal
E_4^n(t),
\end{align*}
where
\begin{align*}
&\cal E_4^n (t):=   \\
& 32K^2 \int_{-\infty}^{t}\int_{|x|_U\geq 1}
\rme^{-2\omega(t-\sigma)} \mathbf{E}\| G(\sigma+s_n,\tilde
Y(\sigma-),x)-\tilde G(\sigma,\tilde Y(\sigma-),x)\|^2 \nu(\rmd x)
\rmd \sigma
\\
&+ \frac{32 K^2 b}{\omega}  \cdot \int_{-\infty}^{t}\int_{|x|_U\geq
1} \rme^{-\omega(t-\sigma)}
\mathbf{E}\| G(\sigma+s_n,\tilde Y(\sigma-),x)-\tilde G(\sigma,\tilde Y(\sigma-),x)\|^2 \nu(\rmd x) \rmd \sigma\\
&+32K^2 \int^{+\infty}_{t}\int_{|x|_U\geq 1}
\rme^{2\omega(t-\sigma)} \mathbf{E}\| G(\sigma+s_n,\tilde
Y(\sigma-),x)-\tilde G(\sigma,\tilde Y(\sigma-),x)\|^2 \nu(\rmd x)
\rmd \sigma
\\
&+ \frac{32 K^2 b}{\omega}  \cdot \int^{+\infty}_{t}\int_{|x|_U\geq
1} \rme^{\omega(t-\sigma)} \mathbf{E}\| G(\sigma+s_n,\tilde
Y(\sigma-),x)-\tilde G(\sigma,\tilde Y(\sigma-),x)\|^2 \nu(\rmd x)
\rmd \sigma.
\end{align*}
It follows from \eqref{uap-G} and the $\cal L^2$-boundedness of
$\tilde Y(\cdot)$ that  $\cal E_4^n(t) \to 0$ uniformly in $t\in\R$
as $n\to\infty$.

By the above estimates of $I_1$--$I_4$, we have
\begin{align*}
\mathbf{E}\|Y_n(t)-\tilde Y(t)\|^2 \le  \cal E^n(t) +
\left(\frac{32K^2L}{\omega^2} +\frac{64K^2L}{\omega} + \frac{64K^2 b
L}{\omega^2}\right)\cdot \sup_{\sigma\in\R}
\mathbf{E}\|Y_n(\sigma)-\tilde Y(\sigma)\|^2
\end{align*}
with $\cal E^n(t)=\sum_{i=1}^4 \cal E_i^n (t)$. So
\begin{align*}
& \sup_{t\in\R}\mathbf{E}\|Y_n(t)-\tilde Y(t)\|^2 \le   \sup_{t\in\R} \cal E^n(t) \\
&\qquad + \left(\frac{32K^2L}{\omega^2} +\frac{64K^2L}{\omega} +
\frac{64K^2 b L}{\omega^2}\right)\cdot \sup_{\sigma\in\R}
\mathbf{E}\|Y_n(\sigma)-\tilde Y(\sigma)\|^2.
\end{align*}
It follows from \eqref{Banar} and the fact
$\lim_{n\to\infty}\sup_{t\in\R}\cal E^n(t)=0$ that
\[
\sup_{t\in\R}\mathbf{E}\|Y_n(t)-\tilde Y(t)\|^2\to 0 \qquad \hbox{as
} n\to\infty.
\]
由于对任意的$t\in\R$, Since $Y(t+s_n)$ and $Y_n(t)$ share the same
distribution for each $t\in\R$, this yields that  故 $Y(t+s_n) \to
\tilde Y(t)$ in distribution uniformly in $t\in\R$ as $n\to\infty$.
The proof is complete.
\end{proof}

\section{Applications}\label{app}
\setcounter{equation}{0}

In this section, we give two examples to illustrate our results
 in this paper.

\begin{exam}\label{ex1}
Let us consider an ordinary differential equation perturbed
by a two-sided L\'evy noise:
\begin{align}\label{ex1}
\rmd y =   (Ay + f(t,y)) \rmd t + g(t,y)\rmd W + \int_{|x|<1} F(t,y,x) \widetilde N(\rmd t,\rmd x) + \int_{|x|\ge 1} G(t,y,x) N(\rmd t,\rmd x),\nonumber
\end{align}
where
\begin{align*}
{A} :=&
\left( \begin{array}{cc}
8&0 \\
0& -6
\end{array} \right), \quad
 \\
f(t, y):=& \left( \begin{array}{cc}
0\\
\frac{\cos\sqrt{2}t+ \sin\sqrt{3}t}{17+\cos\sqrt{5}t}\cdot\frac{y}{y^2+1}
\end{array} \right), \quad
g(t,y):= \left( \begin{array}{cc}
0\\
\frac{1}{12} \sin(y+\cos\sqrt{3}t+\cos\sqrt{2}t)
\end{array} \right), \\
F(t, y, x):=& \left( \begin{array}{cc}
0\\
\frac{1}{10} y
\end{array} \right), \quad
G(t,y,x):= \left( \begin{array}{cc}
0\\
\frac{1}{9}y\cdot\frac{\sin^2(\sqrt{3}t)}{3+\cos\sqrt{2}t+\cos\sqrt{5}t}
\end{array} \right), \\
\end{align*}
where $W$ is a one-dimensional Brownian motion and $N$ is a Poisson
random measure in $\R$ independent of $W$. It is clear that (H1) and
(H2) hold, that is, the semigroup by $A$ satisfies the exponential
dichotomy property with $K=1$ and $\omega=6$;  $f,g$ are almost
periodic in $t$ and $F,G$ are Poisson square-mean almost periodic in
$t$. The Lipschitz constants of $f,g,F,G$ in $y$ can be chosen as
$1/8, 1/12, 1/10, 1/9$, respectively, so the Lipschitz conditions in
(H3) are satisfied with $L=(1/8)^2=1/64$ provided
\[
\int_{|x|<1} \frac{1}{100} \nu(\rmd x) \le \frac{1}{64} \quad \hbox{and} \quad
\int_{|x|\ge 1} \frac{1}{81} \nu(\rmd x) \le \frac{1}{64},
\]
i.e.
\begin{equation*}\label{ex1paa}
\nu((-1,1)) < \frac{25}{16} \quad \hbox{and}\quad b\le \frac{81}{64}.
\end{equation*}
%Since the coefficients satisfy both Lipschitz and global linear growth conditions,
%it follows from classical existence and uniqueness theorem
%(see \cite[Theorem 6.2.9]{A} for instance) or Lemma \ref{global}
%that the equation \eqref{ex1} admits a global in time solution.
Then, the inequalities \eqref{Bar} and \eqref{Banar} become
\[
\frac{1+2b}{36} + \frac{2}{6} < 4 \quad \hbox{and} \quad
\frac{1+2b}{36} + \frac{2}{6} < 2,
\]
i.e. $b<59/2$. By Theorem \ref{thnon1}, \eqref{ex1} admits a unique
$\cal L^2$-bounded mild solution if $\nu((-1,1)) < 25/16$, $b\le
{81}/{64}$; furthermore, by Theorem \ref{thnon2}, this unique  $\cal
L^2$-bounded solution is  almost periodic in distribution under the
same conditions.
\end{exam}

%%%%%%%%%%%%%%%%%%%%%%%%%%%%%%%%%%%%%%%%%%%%%%%%%%%%%%%%%%%%%%%%%%%%%%%%%%%%%%%%%%%%%%%%
\begin{exam}\label{ex2}
Let ${\Omega}\subset\R^n$, be a bounded subset whose boundary $\partial\Omega$
is of class $C^2$ and being locally on one side of $\Omega$.

Consider the parabolic stochastic partial
differential equation:
\begin{align*}
{\ud {u}}(t,\xi) =&\{A(\xi)u(t,\xi)+f(t,u(t,\xi))\}\ud t
+g(t,u(t,\xi))\ud W(t,\xi) \notag\\
&\qquad+h(t,u(t,\xi),Z(t,\xi))\ud Z(t,\xi)
\\
&\sum_{i,j=1}^n n_i(\xi)a_{ij}(t,\xi)D_i{ u}(t,\xi)=0,
\qquad t\in\R, \qquad\xi\in\partial\Omega,
\end{align*}
Here $D_i:=\frac{\rmd}{\rmd \xi_i}$,  $f,g,h$ are continuous
functions with additional properties which would be specified below,
$W$ is a $Q$-Wiener process on $L^2(\Omega)$ with ${\rm Tr}Q
<\infty$, and $Z$ is a L\'{e}vy pure jump process on $L^2(\Omega)$
which is independent of $W$, $n(\xi)=(n_1(\xi),n_2(\xi),\ldots,
n_n(\xi))$ is the outer unit normal vector, the family of operators
$A(\xi)$ are formally given by
\begin{align*}
A(\xi)=\sum_{i,j=1}^n \frac{\partial}{\partial
u_i}\left(a_{ij}(\xi)\frac{\partial}{\partial u_j}\right)+a_0(\xi),
\qquad t\in\R, \qquad\xi\in\Omega,
\end{align*}
and $a_0$, $a_{ij}(i,j=1,2,\ldots,n)$ satisfy the following conditions:

 \begin{itemize}
   \item [(i)] The coefficients $(a_{ij})_{i,j=1,\ldots,n}$ are symmetric, that is, $a_{ij}=a_{ji}$
               for all $i,j=1,\ldots,n$. Moreover, $a_{ij}\in \L^2(\P,C(\overline{\Omega}))$ $\bigcap$
               $\L^2(\P,C^1(\overline{\Omega}))$$\bigcap$$\L^2(\P,\L^2({\Omega}))$
               for all $i,j=1,\ldots,n$, and
               $a_0\in \L^2(\P,\L^2({\Omega}))$ $\bigcap$
               $\L^2(\P,C(\overline{\Omega}))$$\bigcap$$\L^2(\P,\L^1({\Omega}))$.

   \item [(ii)] There exists $\epsilon_0>0$ such that
                 \begin{align*}
                   \sum_{i,j=1}^n a_{ij}(\xi)\eta_i\eta_j\ge \epsilon_0|\eta|^2,
                 \end{align*}
                for all $\xi\in\overline{\Omega}$ and $\eta\in\R^n$.
 \end{itemize}
Denote $ \H= \L^2(\Omega)$. For each $t\in\R$ define an operator $A$
on $\L^2(\P,\H)$ by
\begin{align*}
\mathcal{D}(A)=\{u\in W^{2,2}(\Omega):\sum_{i,j=1}^n
n_i(\cdot)a_{ij}(\cdot)D_i  u=0
 \;\;\; \textrm{on} \;\;\; \partial\Omega\}
\end{align*}
and $AY=A(\xi)u(\xi)$ for all $Y\in \mathcal{D}(A)$.

Under above assumptions, the existence of a semigroup $T(t)$
satisfying (H1) is obtained, see \cite{MS03} for details.

Then the parabolic stochastic partial
differential equation can be written as an
abstract evolution equation
\begin{align}\label{aee}
\rmd Y &=(AY+ {F}(t,Y))\rmd t +{G}(t, Y)\rmd W + \int_{|z|_{L^2} <1}
\mathcal H (t,Y,z) \widetilde N(\rmd t, \rmd
z)\\
&\qquad + \int_{|z|_{L^2} \ge 1} \mathcal H (t,Y,z) N(\rmd t, \rmd
z)\nonumber
\end{align}
on the Hilbert space $\mathbb H$, where
\begin{align*}
{F}(t,Y):=& f(t, u), \quad
{G}(t,Y)\rmd W:= g(t,u)\rmd W\\
h(t,u,Z) \rmd Z
 :=&   \int_{|z|_{L^2} <1} \mathcal H (t,Y,z) \widetilde N(\rmd t, \rmd
z)+ \int_{|z|_{L^2} \ge 1}\mathcal H (t,Y,z) N(\rmd t, \rmd z)
\end{align*}
with
\begin{align*}
Z(t,\xi) &= \int_{|z|_{L^2} <1} z \widetilde N(t, \rmd z)+
\int_{|z|_{L^2} \ge 1} z N(t, \rmd z), \quad
\mathcal H (t,Y,z) = h(t,u,z) z.
\end{align*}
Here we assume for simplicity that the L\'evy pure jump process
$Z$ is decomposed as above by the L\'evy-It\^o
decomposition theorem.

We assume that the continuous functions $f(t,u)$ and $g(t,u)$  are
Lipschitz with respect to $u$  and uniformly almost periodic, then
the functions $\mathbb F$ and $\mathbb G$  in \eqref{aee} are
Lipshictz with respect to $Y$ and uniformly almost periodic. Let the
continuous function $h(t,u)$ be Lipschitz with respect to $u$ and
uniformly Poisson almost periodic, and the intensity measure $\nu$
of the Poisson process $Z$ be such that $\cal H$ is Lipschitz in $Y$
and uniformly Poisson square-mean almost periodic in the sense of
\eqref{F} and \eqref{G}. In particular, when $\nu$ is a finite
measure,  the required Lipschitz condition for  $\cal H$ holds. That
is, (H3) holds.

Since (H1)-(H3) are satisfied, the parabolic stochastic partial
differential equation  has a unique $\cal L^2$-bounded mild solution
$u\in\L^2(\P,\H)$, when  $K$ and the Lipschitz constants for $f, g,
h$ are  appropriately small; furthermore, this unique $\cal
L^2$-bounded solution is almost periodic in distribution, when the
Lipschitz constants for $f, g, h$ are smaller.

\end{exam}

%\noindent{\bf Acknowledgement}\quad
%The author sincerely thanks Professor Zhenxin Liu for his encouragement and helpful discussions.

\end{document}